\theoremstyle{definition}
\newtheorem{theorem}{{Theorem}}[section]
\newtheorem{lemma}[theorem]{{Lemma}}
\newtheorem{proposition}[theorem]{{Proposition}}
\newtheorem{definition}[theorem]{{Definition}}
\newtheorem{corollary}[theorem]{{Corollary}}
\newtheorem{example}[theorem]{{Example}}
\newtheorem{conjecture}[theorem]{{Conjecture}}
\newtheorem{remark}[theorem]{{Remark}}
\newtheorem{question}[theorem]{Question}
\newcommand{\R}{\mathbb{R}}
\newcommand{\F}{\mathbb{F}}
\newcommand{\CFK}{\textit{CFK}} 
\newcommand{\vertical}{\text{vert}}
\newcommand{\s}{\mathfrak{s}}
\newcommand{\cc}{\mathbf{c}}
\newcommand{\yy}{\mathbf{y}}
\newcommand{\xx}{\mathbf{x}}
\title{On the Upsilon invariant of fibered knots and right-veering open books}
\author[D. He]{Dongtai He}
\address {Department of Mathematics, Boston College, Chestnut Hill, MA 02467}
\email{hedo@bc.edu}
\author[D. Hubbard]{Diana Hubbard}
\address {Department of Mathematics, Brooklyn College, Brooklyn, NY 11210}
\email{diana.hubbard@brooklyn.cuny.edu}
\author[L. Truong]{Linh Truong}
\address {School of Mathematics, Institute for Advanced Study, Princeton, NJ 08540}
\email{ltruong@math.ias.edu}
\begin{document}
\maketitle
\begin{abstract}
We give a sufficient condition using the Ozsv\'ath-Stipsicz-Szab\'o concordance invariant Upsilon for the monodromy of the open book decomposition of a fibered knot to be right-veering. As an application, we generalize a result of Baker on ribbon concordances between fibered knots. Following Baker, we conclude that either fibered knots $K$ in $S^{3}$ satisfying that $\Upsilon'(t) = -g(K)$ for some $t \in [0,1)$ are unique in their smooth concordance classes or there exists a counterexample to the Slice-Ribbon Conjecture.
\end{abstract}

\section{Introduction}

Under the Giroux correspondence, open book decompositions of a three-manifold $Y$, up to suitable equivalence, are in bijection with contact structures on $Y$.  
Associated to a fibered knot $K \subset Y$ and its fiber surface $\Sigma$ is an open book decomposition $(\Sigma, \phi_K)$ of $Y$, where $\phi_K: \Sigma \to \Sigma$ is the monodromy of $K$; the fibered knot is often referred to as the binding of the open book. We call $\xi_K$ the induced contact structure under the Giroux correspondence.  A contact structure $\xi_K$ can restrict properties of the monodromy $\phi_K$, as in the following theorem of Honda, Kazez, and Mati\'c.

\begin{theorem}[{\cite{HKM07}}] \label{tight}
A contact structure $\xi$ is tight if and only if the monodromy $\phi$ of every open book $(\Sigma, \phi)$ compatible with $\xi$ is right-veering.
\end{theorem}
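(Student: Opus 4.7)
The plan is to prove each direction by contrapositive. For the direction that tight implies every compatible open book is right-veering, suppose $(\Sigma, \phi)$ is compatible with $\xi$ but $\phi$ is not right-veering. Then there exists a properly embedded arc $\alpha \subset \Sigma$ with a boundary endpoint $p \in \partial\Sigma$ at which $\phi(\alpha)$ lies strictly to the left of $\alpha$. The union $\alpha \cup \phi(\alpha)$ together with an arc on $\partial \Sigma$ from $p$ to $\phi(p)=p$ bounds a disk $D$ in the mapping torus of $\phi$, which can be closed up using a meridional disk in the standard contact neighborhood of the binding. I would then apply Giroux's Legendrian realization principle to make $\partial D$ Legendrian, analyze the characteristic foliation on $D$, and show that the leftward twist of $\phi(\alpha)$ at $p$ forces the dividing set of $D$ to contain a boundary-parallel arc, so that by Giroux's criterion $D$ (or a subdisk) is overtwisted, contradicting tightness.

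For the reverse direction, suppose $\xi$ is overtwisted; I would construct a compatible open book whose monodromy is not right-veering. Fix any overtwisted disk $D_{OT} \subset (Y,\xi)$ in a standard model, and start with any compatible open book $(\Sigma_0, \phi_0)$. Using Giroux's stabilization theory, perform a sequence of positive stabilizations along properly embedded arcs chosen to travel along $\partial D_{OT}$, producing a new compatible open book $(\Sigma, \phi)$ whose page contains an arc $\alpha$ isotopic in $Y$ to a Legendrian arc on $\partial D_{OT}$. The presence of the overtwisted disk forces the image $\phi(\alpha)$ to twist leftward at one endpoint relative to $\alpha$, so $\phi$ is not right-veering.

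The main obstacle I expect is the second direction: carefully choosing the stabilizations and the arc $\alpha$ so that the global overtwisted datum translates into verifiable combinatorial leftward-movement of the monodromy on a specific arc. The first direction is essentially a local convex surface theory argument, but the second requires a delicate interplay between Giroux correspondence, stabilization equivalence, and the geometry of the overtwisted disk.
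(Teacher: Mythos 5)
This is a theorem of Honda, Kazez, and Mati\'c that the paper simply cites from \cite{HKM07} and does not prove, so there is no internal proof to compare against; I will comment on your sketch directly. Your forward direction (tight $\Rightarrow$ right-veering) is broadly in the spirit of the HKM argument, but the key step is asserted, not proved: it is not automatic that $\alpha$, $\phi(\alpha)$, and a boundary arc cobound a disk in the mapping torus, nor that capping at the binding and Legendrian realizing produces a boundary-parallel dividing arc. In fact the actual HKM argument is more delicate: non-right-veering only gives you leftward motion of $\phi(\alpha)$ at \emph{one} endpoint of $\alpha$, and they must work harder (using convex surface theory in a neighborhood of the binding and their earlier tightness results for handlebodies) to extract an overtwisted disk from a one-sided left twist.

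The reverse direction of your sketch has a genuine flaw. You propose producing a non-right-veering compatible open book by a sequence of \emph{positive} stabilizations, but positive stabilization composes the monodromy with a positive Dehn twist and cannot create leftward motion of arcs; indeed, HKM's own proof that \emph{every} contact structure (tight or overtwisted) admits a right-veering open book proceeds exactly by positive stabilization. So positive stabilizations move you toward right-veering, not away from it, and your construction will never detect overtwistedness. The correct route is to use \emph{negative} stabilizations: one shows that every overtwisted contact structure is supported by a negatively stabilized open book (via Eliashberg's homotopy classification of overtwisted structures and the homotopy-class-adjusting effect of Hopf stabilizations), and a negative stabilization is visibly not right-veering. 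Without this change of mechanism the argument for the ``only if'' direction does not go through.
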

Roughly, $\phi$ is right-veering if it sends every properly embedded arc in $\Sigma$ to the right of itself (see Section \ref{sec:fiberedknots} for a precise definition). Honda, Katez and Mati\'{c} also prove that any contact structure admits a right-veering open book via positive stabilization \cite{HKM07}. The question of how monodromies of open books dictate contact geometric information about the three-manifold is not thoroughly understood. 

It is therefore interesting to investigate connections between invariants of fibered knots and properties of their monodromies. The knot Floer complex \cite{OS-tau, Ras03} and its many associated knot invariants (see  \cite{manolescu2016introduction, Hom-survey} for surveys) have proven to be very powerful tools for studying knots. For instance, knot Floer homology detects the genus of knots \cite{OS04b} and detects whether a knot is fibered \cite{ni2007knot}; the knot Floer complex has also given rise to several concordance invariants, starting with the Ozsv\'ath-Szab\'o $\tau$ invariant \cite{OS-tau}. Hedden in \cite{Hedden} produces a relationship between the $\tau$ invariant of a fibered knot in $S^{3}$ and the corresponding contact structure:
\begin{theorem}[{\cite{Hedden}}]
The contact structure $\xi_K$ is {tight} if and only if $\tau(K) = \text{genus}(K)$. 
\end{theorem}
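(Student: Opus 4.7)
The plan is to use the Ozsv\'ath-Szab\'o contact invariant $c(\xi_K) \in \widehat{HF}(-S^3)$ as a bridge between tightness and the knot Floer filtration, factoring the equivalence through
\[
\xi_K \text{ is tight} \iff c(\xi_K) \neq 0 \iff \tau(K) = g(K).
\]
The first equivalence will follow from three ingredients specific to $S^3$: Ozsv\'ath-Szab\'o's vanishing theorem that $c(\xi) = 0$ whenever $\xi$ is overtwisted, Eliashberg's classification showing that $\xi_{std}$ is the unique tight contact structure on $S^3$, and the computation that $c(\xi_{std})$ generates $\widehat{HF}(-S^3) \cong \mathbb{Z}$.

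For the second equivalence, I would appeal to the result of Ozsv\'ath-Szab\'o that for a fibered knot $K$ in $S^3$ the top Alexander summand $\widehat{HFK}(S^3, K, g(K)) \cong \mathbb{Z}$ is generated by a distinguished class whose image under the map to $\widehat{HF}(S^3)$ induced by the Alexander filtration is exactly the contact class $c(\xi_K)$. Since $\tau(K)$ is by definition the maximum Alexander grading at which a cycle survives to a generator of $\widehat{HF}(S^3) \cong \mathbb{Z}$, and the filtered complex vanishes above grading $g(K)$, the equality $\tau(K) = g(K)$ is equivalent to this top generator having nonzero image, which is exactly $c(\xi_K) \neq 0$.

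The technical heart of the argument is the identification of $c(\xi_K)$ with the image of the distinguished top filtered generator; this is where the fiberedness hypothesis is genuinely used, via open-book-adapted Heegaard diagrams in the style developed by Ozsv\'ath-Szab\'o and later refined by Honda-Kazez-Mati\'c. Once this identification is in hand, the remainder of the proof is a clean assembly of classification and vanishing theorems specialized to $S^3$, and I would expect no further substantial difficulty. I note that this cleanliness is peculiar to $S^3$: for fibered knots in more general $3$-manifolds the uniqueness of the tight contact structure fails, and a more delicate argument (likely routed through Theorem~\ref{tight} and a careful analysis of the monodromy) would be required.
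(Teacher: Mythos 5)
The paper does not prove this theorem; it cites it verbatim from \cite{Hedden}, so there is no internal proof to compare against. That said, your sketch does track Hedden's actual argument: one factors through the Ozsv\'ath-Szab\'o contact class $c(\xi_K)$, establishing ``tight $\iff c(\xi_K)\neq 0$'' from the vanishing theorem for overtwisted structures together with Eliashberg's uniqueness and the nonvanishing of $c(\xi_{std})$, and then establishing ``$c(\xi_K)\neq 0 \iff \tau(K)=g(K)$'' by locating the contact class inside the Alexander filtration of a fibered knot. The overall decomposition is the right one.

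The one place that needs sharpening is the second bridge. You assert a map $\widehat{HFK}(S^3,K,g) \to \widehat{HF}(S^3)$ ``induced by the Alexander filtration,'' but the filtration produces maps in the opposite direction at the top: since $\mathcal{F}_g = \widehat{CF}(S^3)$, what you get naturally is the quotient map $\widehat{HF}(S^3) \to H_*(\widehat{CF}/\mathcal{F}_{g-1}) = \widehat{HFK}(K,g)$. The inclusion-induced map you want comes from the \emph{bottom} of a filtration, and in Hedden's setup this is the bottom level of the complex for $-K \subset -S^3$: the contact class is the homology class of the generator $\cc$ with $A(\cc) = -g$ (exactly the generator discussed in Proposition~\ref{prop:c} of this paper), sitting in $\mathcal{F}_{-g} \hookrightarrow \widehat{CF}(-S^3)$, and $c(\xi_K)\neq 0$ iff this inclusion is nonzero on homology iff $\tau(-K) = -g$ iff $\tau(K) = g$. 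Also, $\tau(K)$ is not ``the maximum Alexander grading at which a cycle survives'' but rather the minimum filtration level $s$ for which $H_*(\mathcal{F}_s)\to\widehat{HF}(S^3)$ is nontrivial; this distinction is exactly what makes the bottom-level inclusion the natural object to work with. With these corrections the logic goes through and your overall route is Hedden's.
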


Ozsv\'ath-Stipsicz-Szab\'o \cite{OSS} constructed a one-parameter family of concordance invariant $\Upsilon_K(t)$ that is stronger than the $\tau(K)$ invariant. It is natural to ask whether $\Upsilon_K(t)$ contains more information about the monodromy of a fibered knot than $\tau(K)$. In the main theorem of this paper, we give an affirmative answer by providing a sufficient condition for the monodromy to be {right-veering}.

\begin{theorem}\label{thm:main}
Suppose $K$ is a null-homologous fibered knot in a rational homology sphere $Y$. If $\Upsilon'_{K,\mathfrak{s}}(t)=-g$ for some $t\in [0,1)$, where $g$ is the genus of the fibered surface $\Sigma$, then $\phi:\Sigma\rightarrow\Sigma$ is right-veering. 
\end{theorem}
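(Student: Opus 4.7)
The plan is to reduce the theorem to non-vanishing of the contact invariant $c(\xi_K)\in\widehat{HF}(-Y,\s_{\xi_K})$, and then to chain two standard results: Ozsv\'ath--Szab\'o's criterion that $c(\xi)\neq 0$ implies $\xi$ is tight, followed by Theorem~\ref{tight}, which gives that tightness of $\xi_K$ forces the monodromy $\phi$ of any compatible open book to be right-veering. With this chain in place, it remains only to extract a non-zero contact class from the Upsilon hypothesis.

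For that extraction, I would first recall that Ozsv\'ath--Stipsicz--Szab\'o construct $\Upsilon_{K,\s}(t)$ as an extremum over the knot Floer complex $CFK^\infty(Y,K,\s)$ using a $t$-deformed grading that interpolates between the algebraic and Alexander filtrations, and that, in a simplified-basis model of this complex, the slope of $\Upsilon_{K,\s}$ at $t$ records the Alexander coordinate of the ``pivot'' cycle realizing the extremum. The hypothesis $\Upsilon'_{K,\s}(t)=-g$ for some $t\in[0,1)$ therefore produces a cycle $\xx$ in $CFK^\infty(Y,K,\s)$ of Alexander grading exactly $g$ whose image in the subquotient computing $\widehat{HF}(Y,\s)$ is non-zero. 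The restriction $t<1$ places this pivot on the correct side of the anti-diagonal, which is what makes the ``survives to $\widehat{HF}$'' statement meaningful.

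The final step is to identify $\xx$ with the contact class. Because $K$ is fibered of genus $g$, the top Alexander summand $\widehat{HFK}(Y,K,\s,g)$ is one-dimensional, and under the open-book description of knot Floer homology this line is spanned by the class of the contact cycle $c(\phi,\Sigma)$. Pushing $\xx$ into $\widehat{HF}(Y,\s)$ therefore yields a non-zero multiple of $c(\xi_K)$, which completes the reduction and hence the proof.

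The hard part will be the second paragraph: turning a derivative condition on $\Upsilon$ into the existence of an honest cycle of Alexander grading exactly $g$ that survives to $\widehat{HF}$. Besides using the standard bound that the slope of $\Upsilon$ lies in $[-g,g]$, one must verify that the pivot realizing the slope $-g$ actually sits at Alexander coordinate $g$ (rather than at some lower coordinate that coincidentally contributes the same slope through the $t$-deformation), and that it remains non-trivial after the relevant inclusion/quotient. A clean way to handle this is to work in a reduced model of $CFK^\infty(Y,K,\s)$ in which the bifiltration, the $U$-action, and the contact cycle coming from the open book are simultaneously explicit; then the restriction $t<1$ and a direct bigraded argument should localize the pivot in the correct summand.
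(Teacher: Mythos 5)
Your reduction to tightness of $\xi_K$ cannot work, and the paper itself demonstrates why.  Your plan is to deduce from $\Upsilon'_{K,\s}(t)=-g$ that the contact class $c(\xi_K)\in\widehat{HF}(-Y)$ is non-zero, conclude by Ozsv\'ath--Szab\'o that $\xi_K$ is tight, and then invoke Theorem~\ref{tight}.  But Example~\ref{ex:cable} exhibits an infinite family of fibered knots in $S^3$ --- the $(2,2n+1)$-cables of $T(2,-3)$ for $n\geq 8$ --- whose Upsilon satisfies $\Upsilon'_K(t)=-g(K)$ on $[\tfrac23,1]$ while the compatible contact structure is \emph{overtwisted}, so $c(\xi_K)=0$.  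Thus the hypothesis of the theorem genuinely does not imply tightness, and the chain of implications you propose breaks already at the first arrow.  In fact, the whole point of the theorem is that $\Upsilon$ detects right-veeringness in cases where $\tau$ (and hence the contact class) sees nothing, so any proof that factors through $c(\xi_K)\neq 0$ would collapse the statement back to what Hedden already proved.

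The specific false step is in your second paragraph: a cycle $\theta$ in $CFK^\infty(Y,K,\s)$ that realizes $\nu_t$ at some $t>0$ and has a pivot at Alexander grading $g$ does not, in general, give a non-zero class in the subquotient computing $\widehat{HF}(Y,\s)$ supported in the top Alexander summand.  The implication ``top-Alexander generator is seen by $\Upsilon$ at some $t\in(0,1)$ $\Rightarrow$ top-Alexander generator survives to $\widehat{HF}$'' is precisely the gap, and it is equivalent (via Hedden and the Baldwin--Vela-Vick identification of the contact class in $\widehat{HFK}$) to the false claim that the hypothesis forces $\tau(K)=g(K)$.  The paper avoids this by arguing the contrapositive: assuming $\phi$ is not right-veering, it produces (via Proposition~\ref{prop:c}) an explicit $\yy$ with $\widehat{\partial}\yy=\cc$ and $A(\yy)=1-g$, and shows that for $t<1$ one can always modify any putative realizing cycle $\eta$ by $\partial^\infty U^{-m}\yy$ to remove $U^{-m}\cc$ without increasing $F_t$; combined with the acyclicity argument of Proposition~\ref{prop:contactupsilon}, this kills the slope-$-g$ contribution entirely.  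That structural input from the non-right-veering hypothesis is exactly what your argument is missing, and it is not recoverable from the $\Upsilon$ formalism alone.
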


Here by $\Upsilon(K)$ we mean the generalization of the invariant from \cite{OSS} to null-homologous knots in rational homology spheres introduced by Alfieri-Celoria-Stipsicz \cite{AlfieriCeloriaStipsicz}. In the case of $Y=S^{3}$, this is the same $\Upsilon$-invariant defined in \cite{OSS}.

In Example \ref{ex:cable} we provide an infinite family of fibered knots in $S^{3}$ which have right-veering monodromy by Theorem \ref{thm:main} but whose corresponding contact structures are not tight, showing that $\Upsilon$ detects finer information about the monodromies of fibered knots than $\tau$. In Example \ref{ex:converse} we show that the converse of Theorem \ref{thm:main} does not hold even in $S^{3}$.

In Section \ref{sec:applications} we discuss some consequences of Theorem \ref{thm:main}. One corollary is that under certain circumstances we can obstruct concordance between two fibered knots in $S^{3}$:

\begin{proposition}\label{prop:concordanceobst} Let $K$ in $S^{3}$ be a fibered knot of genus $g$ such that $\Upsilon'_{K}(t_{0}) = -g(K)$ for some $t_{0} \in [0,1)$. Then $K$ cannot be concordant to any fibered knot in $S^{3}$ of the same genus whose monodromy is not right-veering. 
\end{proposition}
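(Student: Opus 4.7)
The plan is to derive Proposition \ref{prop:concordanceobst} as a direct consequence of Theorem \ref{thm:main} combined with the concordance invariance of $\Upsilon$. The argument is by contradiction, so I would begin by supposing that $K' \subset S^{3}$ is a fibered knot of genus $g(K') = g$ concordant to $K$, and I would aim to show that the monodromy of $K'$ must in fact be right-veering, contradicting the hypothesis on $K'$.

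First I would recall that $\Upsilon$ is a smooth concordance invariant (\cite{OSS}), so the function $\Upsilon_{K}(t) = \Upsilon_{K'}(t)$ on $[0,2]$, and in particular the one-sided derivatives agree, yielding $\Upsilon'_{K'}(t_{0}) = \Upsilon'_{K}(t_{0}) = -g$. Since $K'$ is a fibered knot of genus $g$ in $S^{3}$, the fiber surface $\Sigma'$ of $K'$ has genus $g(\Sigma') = g(K') = g$. Thus the hypotheses of Theorem \ref{thm:main} (with $Y = S^{3}$, which is certainly a rational homology sphere, and the unique $\mathrm{Spin}^{c}$ structure) are satisfied for $K'$, and we may conclude that the monodromy of the open book decomposition associated to $K'$ is right-veering. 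This contradicts our assumption on $K'$, completing the proof.

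There is no real obstacle here: the only subtlety worth flagging is the matching of conventions, namely that the $\Upsilon$ invariant used in Theorem \ref{thm:main} (the Alfieri--Celoria--Stipsicz extension) reduces to the original Ozsv\'ath--Stipsicz--Szab\'o $\Upsilon$ in $S^{3}$, so concordance invariance in the classical sense suffices. The proposition is essentially a formal corollary, and I would present it as such in a short paragraph.
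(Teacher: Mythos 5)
Your proof is correct and follows essentially the same route as the paper: invoke concordance invariance of $\Upsilon$ to transfer the slope condition to the concordant knot, then apply Theorem \ref{thm:main}. The only cosmetic difference is that you phrase it as a contradiction, whereas the paper states the conclusion directly.
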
 

In general one fibered knot having right-veering monodromy and the other non-right-veering is not enough to obstruct concordance, even if the two knots have the same genus. We find it interesting that under certain additional conditions these concordances cannot occur. A natural question to ask, and one that we think is deserving of more exploration, is whether there are other circumstances when monodromies of fibered knots can obstruct concordance.

We conclude the paper with an application to the unresolved Slice-Ribbon Conjecture that is inspired by the work of Baker \cite{baker}. The Slice-Ribbon Conjecture famously posits that every slice knot bounds a ribbon disk. We prove:

\begin{theorem}\label{thm:ribbonequal}
Let $K_{0}$ and $K_{1}$ be fibered knots in $S^{3}$ such that $\Upsilon'_{K_{i}}(t_{i}) = -g(K_{i})$ for some $t_{i} \in [0,1]$, for each $i \in \{0, 1\}$. If $K_{0}\# -K_{1}$ is ribbon, then $K_{0}=K_{1}$.
\end{theorem}

We ask whether the corresponding statement would be true if the ribbon condition were replaced by $K_{0}$ and $K_{1}$ being concordant:

\begin{question}\label{que:sliceribbon} Suppose $K_{0}$ and $K_{1}$ are fibered knots in $S^{3}$ satisfying that for each $i \in \{0, 1\}$, $\Upsilon'_{K_{i}}(t_{i}) = -g(K_{i})$ for some $t_{i} \in [0,1)$. If $K_{0}$ and $K_{1}$ are concordant, is $K_{0} = K_{1}$? 
\end{question}

And we immediately observe that this line of reasoning opens up a possible avenue for disproving the Slice-Ribbon Conjecture:

\begin{corollary}\label{cor:sliceribbon} If the answer to Question \ref{que:sliceribbon} is negative, then the Slice-Ribbon Conjecture is false.
\end{corollary}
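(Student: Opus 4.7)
The plan is a straightforward contrapositive argument that chains together Theorem \ref{thm:ribbonequal} with the standard equivalence between concordance and sliceness of a connected sum.

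First I would suppose, for contradiction, that the Slice-Ribbon Conjecture holds while the answer to Question \ref{que:sliceribbon} is negative. The negative answer to the question produces a pair of fibered knots $K_0, K_1$ in $S^3$, each satisfying $\Upsilon'_{K_i}(t_i) = -g(K_i)$ for some $t_i \in [0,1)$, such that $K_0$ and $K_1$ are smoothly concordant but $K_0 \neq K_1$.

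Next I would invoke the standard fact that two knots in $S^3$ are smoothly concordant if and only if their difference $K_0 \# -K_1$ is smoothly slice. Applying the assumed Slice-Ribbon Conjecture, this slice knot $K_0 \# -K_1$ must in fact be ribbon. Since $[0,1) \subset [0,1]$, the hypotheses of Theorem \ref{thm:ribbonequal} are satisfied by the pair $(K_0, K_1)$, so that theorem forces $K_0 = K_1$, contradicting the choice of counterexample to Question \ref{que:sliceribbon}. This contradiction yields the corollary.

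There is essentially no obstacle here; the content has been front-loaded into Theorem \ref{thm:ribbonequal}. The only minor point to verify is that the strict inequality $t_i<1$ in Question \ref{que:sliceribbon} is compatible with the closed-interval hypothesis $t_i\in[0,1]$ of Theorem \ref{thm:ribbonequal}, which is immediate.
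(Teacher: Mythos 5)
Your proof is correct and matches the paper's (implicit) reasoning exactly: the paper states the corollary as an immediate observation following Theorem~\ref{thm:ribbonequal}, relying on precisely the chain you spell out — a concordant-but-distinct pair contradicting Question~\ref{que:sliceribbon} gives a slice connected sum $K_0 \# -K_1$, which Slice-Ribbon would promote to ribbon, which Theorem~\ref{thm:ribbonequal} would collapse to $K_0 = K_1$, a contradiction. Your check on the interval inclusion $[0,1) \subset [0,1]$ is the right small detail to flag, and there is nothing else to verify.
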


Finally, we note here that this work was first inspired by \cite{GLW16}. In the setting of oriented links in a thickened annulus, Grigsby, Licata and Wehrli in \cite{GLW16} define a family of annular Rasmussen invariants $\{d_{t}(L,o)\}_{t\in [0,2]}$ from the Khovanov-Lee complex. In particular, the authors study the case when $(L,o)$ is a braid closure $L = \hat{\beta}$ equipped with its braid-like orientation $o$. They find interesting connections between $d_{t}(\hat{\beta})$ and the positivity of braids and the right-veeringness property of the monodromy:

\begin{theorem} \label{bp}
\cite{GLW16} Let $\hat{\beta}$ be a braid closure with its natural orientation. If $\beta$ is quasipositive, then $d'_{t}(\hat{\beta})=b$ for all $t\in [0,1)$, where $b$ is the braid index of $\beta$.
\end{theorem}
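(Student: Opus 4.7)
The plan is to exhibit an explicit Lee cycle realizing $d_t(\hat\beta)$ at maximal annular grading throughout $t\in[0,1)$. Recall that in \cite{GLW16} the invariant $d_t$ is defined via a filtration on the Lee chain complex of an oriented annular link, and the resulting function is piecewise linear in $t$ with slopes given by the annular $k$-grading of the Lee-homology representative realizing the infimum filtration value. For a braid closure on $b$ strands, the maximal $k$-grading occurring in the annular Lee complex equals $b$. Consequently, $d_t'(\hat\beta)\leq b$ always, and to prove equality on $[0,1)$ it suffices to construct a Lee cycle in annular grading $b$ whose filtration value saturates the bound on this entire interval.

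The natural candidate is the canonical Lee generator $\mathfrak{s}_o$ associated to the braid-like orientation $o$. A quasipositive factorization $\beta=\prod_{i=1}^{k}w_i\sigma_{j_i}w_i^{-1}$ exhibits $\hat\beta$ as the result of attaching $k$ positive bands to the $b$-component unlink $U_b$, producing a quasipositive surface $F\subset D^4$ with $\chi(F)=b-k$. For $U_b$ with its natural orientation, a direct tensor-product computation yields that $d_t(U_b)$ has slope $b$ on $[0,1)$, realized by $\mathfrak{s}_o(U_b)$ sitting in $k$-grading $b$. Applying the induced filtered cobordism maps along $F$, and using that each positive saddle corresponds to the coherent (orientation-preserving) smoothing, $\mathfrak{s}_o(U_b)$ is mapped to $\mathfrak{s}_o(\hat\beta)$, still in annular grading $b$.

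To conclude, one verifies that the resulting Lee cycle is non-null-homologous and that its filtration level cannot be improved by any competing cycle in grading $b$ for $t\in[0,1)$. Non-vanishing is the annular refinement of the statement that the canonical Lee generator of a quasipositive braid represents a non-trivial Lee class, which can be obtained via Rudolph's result that quasipositive braids realize the slice-Bennequin bound, reinterpreted at chain level. The filtration claim then follows: any Lee cycle of lower filtration in grading $b$ would, by concavity of $d_t$ and the sharpness of the unlink computation under $F$, force $d_t'$ to drop strictly below $b$ somewhere in $[0,1)$, contradicting the matching upper and lower bounds.

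The main obstacle is controlling the $t$-filtration shift across each positive saddle. Concretely, one has to show that the local Frobenius-algebra model for a positive saddle in the annular Khovanov--Lee setting shifts the $t$-filtration of $\mathfrak{s}_o$ by exactly the sharp expected amount, with no slack that would reduce $d_t'(\hat\beta)$ below $b$. This is carried out by a local computation at the saddle together with an induction on the number $k$ of quasipositive bands, tracking simultaneously the quantum grading, the annular grading, and the filtration.
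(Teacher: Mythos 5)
The paper does not actually prove Theorem~\ref{bp}; it is quoted from Grigsby--Licata--Wehrli \cite{GLW16} purely as motivating background for the $\Upsilon$-analogue (Theorem~\ref{thm:main}) that the paper does prove. There is therefore no internal proof to compare yours against, so I will assess your sketch on its own terms.

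The cobordism strategy is a sensible plan, but two steps are not sound as written. First, the non-vanishing claim is misattributed: the canonical Lee class $[\mathfrak{s}_o]$ is nonzero for \emph{every} oriented link diagram --- that is Lee's theorem and has nothing to do with Rudolph or quasipositivity. What quasipositivity actually supplies, and what your argument never invokes, is \emph{sharpness}: the Rasmussen-type slice-Bennequin inequality becomes an equality for $\hat\beta$, which is the lower bound needed to pin the filtration level of $[\mathfrak{s}_o + \mathfrak{s}_{\bar o}]$. Without that, the cobordism from $U_b$ gives you only a one-sided estimate on the filtration level, and you cannot conclude that the slope of $d_t$ equals $b$ rather than merely bounding it. Second, the closing ``sharpness'' paragraph is circular: you appeal to ``matching upper and lower bounds'' to rule out a competing cycle, but you have only constructed one bound, and a competitor is exactly what would break the alleged match. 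Worse, you only consider competitors sitting in $k$-grading $b$, whereas the real threat to $d_t'=b$ on $[0,1)$ is a Lee cycle in strictly \emph{lower} $k$-grading whose $t$-weighted filtration value beats that of $\mathfrak{s}_o$ for some $t<1$; nothing in the sketch excludes this. Finally, the step you flag as the ``main obstacle'' --- controlling the $t$-filtration shift across each quasipositive band, which is a saddle at $\sigma_{j_i}$ followed by Reidemeister~II cancellations of $w_iw_i^{-1}$, each contributing a filtration shift --- is deferred rather than carried out; this is not a routine verification but is precisely the substantive content of the GLW16 proof.
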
    

\begin{theorem} \label{br}
\cite{GLW16} If $d'_{t}(\hat{\beta})=b$ for some $t\in [0,1)$, then $\beta$ is right-veering. 
\end{theorem}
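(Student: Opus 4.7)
The plan is to argue the contrapositive: assuming $\beta$ is not right-veering, I aim to show that $d'_t(\hat{\beta}) < b$ for every $t \in [0,1)$. My starting point would be the structural picture of the annular Khovanov--Lee complex used to define $d_t$. The $\mathfrak{sl}_2$ action coming from the extra puncture of the annulus endows the complex with an additional annular $k$-grading, and the Lee deformation induces a family of filtrations interpolated by the parameter $t$. For a braid $\beta$ on $b$ strands, the oriented resolution of $\hat{\beta}$ contributes canonical Lee generators whose $k$-grading equals $\pm b$, and the condition $d'_t(\hat{\beta}) = b$ is essentially the statement that the top canonical class survives at maximal annular degree after passing to the $t$-interpolated Lee filtration.

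Next, I would translate the failure of right-veering into a local feature of some braid word representing $\beta$. By definition, $\beta$ is not right-veering precisely when some properly embedded arc $\alpha$ in the $b$-punctured disk is sent to the left of itself at one endpoint. Using standard Dehornoy- or Honda--Kazez--Mati\'c-style characterizations, one can pass to a word presentation in which a specific negative generator $\sigma_i^{-1}$ sits in a controlled position relative to $\alpha$. The cube of resolutions for $\hat{\beta}$ then contains the oriented resolution together with the resolution obtained by smoothing only at this distinguished $\sigma_i^{-1}$, and my goal is to exploit the Lee differential between these two resolutions to exhibit a class that witnesses a strict drop in the $k$-grading of the top surviving Lee generator.

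The third step would be to upgrade this local picture to the $t$-deformed setting for all $t \in [0,1)$. Here I would rely on the semicontinuity of $d_t$ with respect to the annular filtration together with the convergence of the annular-to-Lee spectral sequence. The main obstacle will be verifying that the local cancellation at the negative crossing genuinely reduces $d'_t$ by at least $1$ throughout the half-open interval $[0,1)$ and is not reintroduced by higher-order differentials in the spectral sequence. I expect the cleanest route is to encode the non-right-veering arc via a Plamenevskaya-style transverse invariant living in annular Khovanov--Lee homology and then to appeal to vanishing or degeneration behavior of that invariant for non-right-veering braids, which would translate the topological obstruction directly into the desired bound on the slope of $d_t(\hat{\beta})$.
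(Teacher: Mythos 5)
This theorem is not proved in the paper under review; it is cited from Grigsby--Licata--Wehrli \cite{GLW16}. What the paper \emph{does} prove is the knot-Floer analogue, Theorem~\ref{thm:main}, and the authors explicitly frame Theorem~\ref{thm:main} as translating the \cite{GLW16} argument through the Birman--Hilden correspondence. So the closest internal comparison for your proposal is the proof of Theorem~\ref{thm:main}.

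Your overall architecture --- argue the contrapositive; identify a distinguished ``bottom'' generator whose survival at the appropriate filtration level governs the slope; show that non-right-veering monodromy kills it --- is exactly the right shape and matches both the \cite{GLW16} argument and the paper's Floer-theoretic version. However, your third step is off-target. You propose to reduce non-right-veering to a local feature of some braid word (a distinguished $\sigma_i^{-1}$ ``in a controlled position'') and then cancel against the oriented resolution. Right-veering is not a local property of a braid word, and a single negative crossing does not witness its failure; there is no Dehornoy-style normal form that isolates the obstruction at one letter. Both \cite{GLW16} and the proof of Theorem~\ref{thm:main} work directly with the non-right-veering \emph{arc}: one uses the arc to produce a specific generator $\mathbf{y}$ in the relevant complex with a differential $\partial\mathbf{y} = \mathbf{c}$ onto the distinguished class (the Plamenevskaya element in the annular Khovanov setting; the Baldwin--Vela-Vick contact generator in $\widehat{\textit{CFK}}$, as in Proposition~\ref{prop:c}). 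Your last paragraph, which pivots to ``encode the non-right-veering arc via a Plamenevskaya-style transverse invariant,'' is the correct move and effectively abandons the braid-word reduction; you should start there rather than treat it as a fallback.

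The other gap is in your step on ``semicontinuity of $d_t$'' and ``convergence of the annular-to-Lee spectral sequence.'' As executed in the paper's proof of Theorem~\ref{thm:main}, no spectral-sequence argument is needed: one takes an arbitrary cycle $\eta$ realizing the extremal filtration level that contains the distinguished generator as a summand, adds $\partial^{\infty}$ of the generator $\mathbf{y}$ to replace it with an error term $\rho$ supported strictly lower in the relevant filtrations, and then does a direct $f_t$-level estimate to show $F_t(U^{-m}\rho) < f_t(U^{-m}\mathbf{c})$ precisely when $t<1$. That inequality is where the $[0,1)$ range comes from; it is an elementary bookkeeping computation, not a statement about higher differentials surviving. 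You should carry out the analogous explicit filtration estimate for the $t$-interpolated Lee filtration rather than appeal to semicontinuity, both because it is more concrete and because it is what actually pins down the endpoint $t=1$.
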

Recall from \cite{Birman-Hilden} that the preimage of the braid axis in the double branched cover of a braid closure is a fibered knot. Inspired by the Birman-Hilden correspondence between braid closures and fibered knots in the double branched cover, Theorem \ref{thm:main} uses the $\Upsilon$-invariant for fibered knots, analogous to the way that Theorem \ref{br} uses the $d_t$ invariant for braid closures, to prove a sufficient condition for right-veeringness. However, we note that the analogue of Theorem \ref{bp} does not hold, as the $\Upsilon-$invariant for a quasipositive fibered knot (e.g. the torus knot $T(3,7)$) does not necessarily have a single slope on $t\in [0,1)$.

\subsection{Acknowledgements}
The main theorem of this paper grew out of ideas from the first author's PhD thesis \cite{He-thesis}. We thank Eli Grigsby for suggesting and encouraging this collaboration and her interest in this project. We also thank Jen Hom and Olga Plamenevskaya for thoughtful comments. The second and third authors would like to thank the organizers of the 2019 \emph{Women in Symplectic and Contact Geometry and Topology} (WiSCon) workshop at ICERM, where some of this work took place, and are grateful to ICERM and to the AWM for supporting WiSCon via the AWM ADVANCE grant NSF-HRD 1500481. The second author was  supported in part by an AMS-Simons travel grant. The third author was partially supported by NSF grant DMS-1606451 and the Institute for Advanced Study.

\section{Background on the Knot Floer complex}\label{sec:HF}
In this section we give a brief overview of the construction of the Heegaard Floer complex of knots due to \cite{Ras03} and independently \cite{OS04}. We assume that the reader is familiar with the basics of the knot Floer package of invariants and aim primarily to establish notation. For a detailed expository description, see \cite{manolescu2016introduction}. Let $Y$ be a rational homology sphere, and let $K\subset Y$ a null-homologous knot. We can associate to the pair $(Y,K)$ a doubly pointed Heegaard diagram $(\Sigma,\alpha,\beta,w,z)$ consisting of the following data:

\begin{itemize}
\item A Heegaard surface of genus $g$, splitting $Y$ into two handlebodies $U_{0}$ and $U_{1}$
\item linearly independent curves $\alpha=\{\alpha_{1},...,\alpha_{g}\}$, $\beta = \{\beta_{1},...,\beta_{g}\}$ on $\Sigma$ and
\item base points $w,z\in\Sigma-\alpha_{1}-...-\alpha_{g}-\beta_{1}-...-\beta_{g}$.
\end{itemize}
If we connect $w$ and $z$ by a curve $a$ in $\Sigma-\alpha_{1}-...-\alpha_{g}$ and another curve in $\Sigma-\beta_{1}-...-\beta_{g}$, then the knot $K$ is obtained by pushing $a$ and $b$ into $U_{0}$ and $U_{1}$ respectively.

Consider the set of points in $\mathbb{T}_{\alpha} \cap \mathbb{T}_{\beta}$, that is, consider the usual $g$-tuples of intersection points between the $\alpha$- and $\beta$-curves, where each $\alpha$ and each $\beta$ curve is used exactly once. This set admits two gradings: the Maslov (homological) grading and Alexander grading. 
For any $\mathbf{x}, \mathbf{y} \in \mathbb{T}_{\alpha} \cap \mathbb{T}_{\beta}$ and $\phi \in \pi_{2}(\mathbf{x},\mathbf{y})$, the Alexander grading $A(\mathbf{x})$ satisfies that  $$A(\mathbf{x})-A(\mathbf{y})=n_{z}(\phi)-n_{w}(\phi),$$  and the Maslov grading $M(\mathbf{x})$ satisfies that $$M(\mathbf{x})-M(\mathbf{y}) = \mu(\phi) -2 n_{w}(\phi).$$

The \textit{(full) knot Floer complex} $CFK^{\infty}(Y,K)$ is freely generated by $\mathbb{T}_{\alpha} \cap \mathbb{T}_{\beta}$ over $\mathbb{F}_{2}[U,U^{-1}]$. Following \cite{OS04} and \cite{manolescu2016introduction}, we think of $CFK^{\infty}$ as being freely generated over $\mathbb{Z}$ by triples 
$$[\mathbf{x},i,j], \mathbf{x} \in \mathbb{T}_{\alpha} \cap \mathbb{T}_{\beta}, i, j \in \mathbb{Z}, \text{\, with \,} A(\mathbf{x}) = j-i.$$ The triple $[\mathbf{x},i,j]$ designates $U^{-i} \mathbf{x}$. The $U$-action decreases the Alexander grading by one, so the $j$-coordinate of $[\mathbf{x},i,j]$ describes the Alexander grading of $U^{-i} \mathbf{x}$ and the $i$-coordinate describes the negative of the $U$-power. So we have:
 $$U([\mathbf{x},i,j])=[\mathbf{x},i-1,j-1]$$ The Maslov grading is decreased by two by the $U$-action. 
The differential on $CFK^{\infty}$ is given by $$\partial^{\infty}[\mathbf{x},i,j]=\sum_{\mathbf{y}\in\mathbb{T}_{\alpha}\cap\mathbb{T}_{\beta}}\sum_{\{\phi\in\pi _{2}(\mathbf{x},\mathbf{y}|\mu (\phi)=1\}}\# (\widehat{M}(\phi))[\mathbf{y},i-n_{w}(\phi ),j-n_{z}(\phi)]$$
where $\# (\widehat{\mathcal{M}}(\phi))$ is counted modulo 2. The filtered chain homotopy type of $CFK^{\infty}$ is a knot invariant.

It is often useful to represent $CFK^{\infty}$ graphically. We do so by representing each $[\mathbf{x},i,j]$ as a dot in the plane at coordinate $(i,j)$ and drawing the differentials as arrows. (In such a picture the Maslov grading is suppressed.) The differential on the complex decreases the Maslov grading by one, and can preserve or decrease the Alexander grading. On the graphical representation of $CFK^{\infty}$, the arrows point non-strictly down and left.
 Finally, $CFK^{\infty}$ splits as a direct sum, that is, $$CFK^{\infty}(Y,K)=\bigoplus_{\mathfrak{s}\in spin^{c}(Y)}CFK^{\infty}(Y,K,\mathfrak{s}),$$
and the homology of $CFK^{\infty}(Y,K,\mathfrak{s})$ is $HF^{\infty}(Y,\mathfrak{s})\cong\mathbb{F}[U,U^{-1}]$ as a relatively graded $\mathbb{F}[U,U^{-1}]-$module. An absolute grading can be defined where the base element $\mathbf{1}\in\mathbb{F}[U,U^{-1}]$ has Maslov grading $d(Y,\mathfrak{s})$, the Heegaard Floer correction term defined in \cite{OS03}. 

We can consider $CFK^{\infty}$ as a $\mathbb{Z}\oplus\mathbb{Z}$ filtered complex, where the $i$-coordinate gives what we call the algebraic filtration, and the $j$-coordinate gives what we call the Alexander filtration. Further on in the paper, following \cite{Hom} we will consider only the parts of the differential that preserve the Alexander grading (that is, the horizontal arrows), and this will be denoted $\partial^{horz}$, and we will also consider only the parts of the differential represented by vertical arrows, $\partial^{vert}$. Note that many of the other knot Floer and Heegaard Floer constructions can be obtained from this complex by restricting which $(i,j)$ we consider. In particular, if we consider only triples of the form $[\mathbf{x},0,j]$ and restrict to the differentials that preserve $i$ and $j$, then we obtain the well-studied hat complex $\widehat{CFK}$.  If we consider only triples of the form $[\mathbf{x},0,j]$ and restrict to the differentials that preserve $i$ (but allow $j$ to change), then we obtain $\widehat{CF}$ with the Alexander filtration given by $j$.  Later on in the paper, we will refer to these filtration levels as $\mathcal{F}_{j}$, that is, $\mathcal{F}_{j}$ will represent the subspace of $\widehat{CF}$ spanned by generators with Alexander grading at most $j$.

\section{The Upsilon invariant for knots in rational homology spheres}


In this section we review the definition and properties of the concordance invariant Upsilon for null-homologous knots in rational homology spheres introduced by Alfieri-Celoria-Stipsicz \cite{AlfieriCeloriaStipsicz}. The definition closely follows Livingston's approach in \cite{Liv17} for defining the Upsilon invariant for knots in $S^3$.

Fix a null-homologous knot $K$ in a rational homology sphere $Y$ and fix a $Spin^{c}$ structure on $Y$.  For $t\in [0,2]$ and a generator $[\mathbf{x},i,j] \in CFK^{\infty}(Y,K,\mathfrak{s})=C$, we define the real-valued function $$f_{t}([\mathbf{x},i,j])=(1-\frac{t}{2})i+\frac{t}{2}j.$$ Furthermore, if $\mathbf{\theta} = [\mathbf{x}_{1},i_{1},j_{1}] +...+ [\mathbf{x}_{n},i_{n},j_{n}]$ is a chain in $C$, we also define a function $$F_{t}(\mathbf{\theta})= \max\{f_{t}([\mathbf{x}_{k},i_{k},j_{k})]\}.$$


\begin{proposition} 
The function $F_{t}$ defines a filtration $\mathcal{F}^{t}$ on $C$, where the filtered subcomplexes are given by $\mathcal{F}^{t}_{s}=F_{t}^{-1}(-\infty,s]$. Furthermore, $\mathcal{F}^{t}$ is discrete, i.e., for any $s_{1}\geq s_{2}$, $\mathcal{F}^{t}_{s_{1}}/\mathcal{F}^{t}_{s_{2}}$ is finite-dimensional.   
\end{proposition}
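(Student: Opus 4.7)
The plan is to verify three things: that each sublevel set $\mathcal{F}^t_s$ is closed under $\partial^\infty$, that the family is nested in $s$, and that the successive quotients are finite-dimensional. Nestedness is immediate from the definition, since $F_t^{-1}(-\infty, s_2] \subseteq F_t^{-1}(-\infty, s_1]$ whenever $s_2 \leq s_1$, so the content lies in the remaining two points.

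For the subcomplex property, I would use the fact that in the formula for $\partial^\infty[\mathbf{x},i,j]$ the multiplicities $n_w(\phi)$ and $n_z(\phi)$ are always nonnegative, and that for $t \in [0,2]$ both coefficients $(1 - t/2)$ and $t/2$ appearing in $f_t$ are nonnegative. A direct computation then gives
\[
f_t\bigl([\mathbf{y}, i - n_w(\phi), j - n_z(\phi)]\bigr) = f_t([\mathbf{x},i,j]) - \bigl(1 - \tfrac{t}{2}\bigr) n_w(\phi) - \tfrac{t}{2}\, n_z(\phi) \leq f_t([\mathbf{x},i,j]).
\]
For a general chain $\theta = \sum_k [\mathbf{x}_k, i_k, j_k]$, each term appearing in $\partial^\infty \theta$ arises from a single summand, so its $f_t$-value is at most $\max_k f_t([\mathbf{x}_k, i_k, j_k]) = F_t(\theta)$. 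Hence $F_t(\partial^\infty \theta) \leq F_t(\theta)$, which shows that $\partial^\infty$ preserves each $\mathcal{F}^t_s$.

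For discreteness, the essential input is the finiteness of $\mathbb{T}_\alpha \cap \mathbb{T}_\beta$, which makes $\{A(\mathbf{x}) : \mathbf{x} \in \mathbb{T}_\alpha \cap \mathbb{T}_\beta\}$ a finite subset of $\mathbb{Q}$. Using $j - i = A(\mathbf{x})$, I would rewrite
\[
f_t([\mathbf{x},i,j]) = \bigl(1 - \tfrac{t}{2}\bigr) i + \tfrac{t}{2}\, j = i + \tfrac{t}{2}\, A(\mathbf{x}).
\]
The condition $s_2 < f_t([\mathbf{x},i,j]) \leq s_1$ then confines $i$ to an interval of length $s_1 - s_2$ (depending only on $A(\mathbf{x})$), which contains only finitely many integers; once $\mathbf{x}$ and $i$ are chosen, $j = i + A(\mathbf{x})$ is determined. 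Since there are only finitely many $\mathbf{x}$ and only finitely many admissible values of $A(\mathbf{x})$, the quotient $\mathcal{F}^t_{s_1}/\mathcal{F}^t_{s_2}$ has finitely many generators. I do not anticipate any serious obstacle here; the entire argument amounts to observing that $f_t$ is a nonnegative combination of the algebraic and Alexander coordinates and combining this with the standard finiteness properties of Heegaard diagrams.
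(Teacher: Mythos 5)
Your argument for the subcomplex property is the same as the paper's, just written out more explicitly: both observe that $n_w,n_z\geq 0$ and that $1-\tfrac{t}{2}$ and $\tfrac{t}{2}$ are nonnegative, so $f_t$ does not increase under $\partial^\infty$. For discreteness the paper argues geometrically, sandwiching $\mathcal{F}^t_{s_2}\subset\mathcal{F}^t_{s_1}$ between two levels $C(i\leq k_1)\subset C(i\leq k_2)$ of the algebraic filtration and invoking the known discreteness of the latter, whereas you rewrite $f_t([\mathbf{x},i,j]) = i + \tfrac{t}{2}A(\mathbf{x})$ and count directly: for each of the finitely many $\mathbf{x}$, the condition $s_2 < f_t \leq s_1$ confines the integer $i$ to an interval of length $s_1-s_2$, and $j$ is then determined. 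The two arguments use the same essential input (finitely many generators of $\widehat{\CFK}$ and integrality of $i$), so this is a minor variation rather than a different route; your version is a bit more self-contained and verifies the endpoint cases $t=0,2$ transparently, while the paper's version reduces cleanly to a standard fact already in hand. Both are correct.
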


\noindent
\begin{proof} We have $\partial^{\infty}(\mathbf{\theta})=\Sigma\partial^{\infty}[\mathbf{x}_{k},i_{k},j_{k}]$, where $\partial^{\infty}$ non-strictly reduces both $i_{k}$ and $j_{k}$. Both $1-\frac{t}{2}$ and $\frac{t}{2}$ are nonnegative, so $F_{t}(\mathbf{\theta})\geq F_{t}(\partial^{\infty}(\mathbf{\theta}))$. For discreteness, notice first that for $t \in (0,2]$ the filtered subcomplex $\mathcal{F}^{t}_{s}$ is generated by triples $[\mathbf{x},i,j]$ in the $(i,j)$-plane that lie on or below the line of slope $1-\frac{2}{t}$ with $j$-intercept $\frac{2s}{t}$; when $t=0$ we are considering just those triples with $i \leq s$. From this graphical perspective we see that there must exist $k_{1}$ and $k_{2}$ such that $C(i\leq k_{1})\subset\mathcal{F}^{t}_{s_{2}} \subset\mathcal{F}^{t}_{s_{1}}\subset C(i\leq k_{2})$. Since the algebraic filtration is discrete, so is $\mathcal{F}^{t}$. \end{proof}


The Alfieri-Celoria-Stipsicz invariant $\Upsilon$ depends on the choice of a southwest region of the plane. We work with the southwest region corresponding to a half-plane
\[ U_t = \{ (x,y) \in \R^2 \mid \frac{t}{2} \cdot y + (1- \frac{t}{2}) \cdot x \leq 0\}.\]
Given the knot Floer complex $\CFK^\infty(K)$, we will denote by $C_t$ the $\F_2[U]$-submodule of $\CFK^\infty(K)$ spanned by the generators lying in $U_t$.
\begin{definition}[{\cite{AlfieriCeloriaStipsicz}}] \label{def:ACS}
 Let $t \in [0,2]$. Define
\[\nu_t(Y, K, \s) = \min \{ s \mid (C_t)_s \hookrightarrow \CFK^\infty(Y,K,\s) \text{ induces a surjective map on } H_{d(Y,\mathfrak{s})} \}\]
where $(C_t)_s = \{ (x,y) \mid (x-s, y-s) \in C_t\}$
and the Maslov grading gives the homological grading. Define $\Upsilon_{Y,K,\mathfrak{s}}(t)=-2\nu_{t}(Y,K,\mathfrak{s})$.
\end{definition}
The $\Upsilon_{Y, K, \s}$ invariant in Definition \ref{def:ACS} recovers the Livingston reformulation of $\Upsilon_K$ for knots in $S^3$ \cite{Liv17}. See \cite[Sections 3 and 4]{AlfieriCeloriaStipsicz}.

We will work with the following equivalent definition (see also \cite[Definition 4.2]{HomLevineLidman}).
\begin{definition}[{cf. Definition \ref{def:ACS}}]\label{nu} Let $t \in [0,2]$. Define
\[\nu_{t}(Y,K,\mathfrak{s})= \min \{F_{t}(\theta) \mid \theta \text{ is a cycle in }C \text{ with Maslov grading } d(Y,\mathfrak{s}), 0 \neq [\theta] \in H_*(C)\}.\]
Define
$\Upsilon_{Y,K,\mathfrak{s}}(t)=-2\nu_{t}(Y,K,\mathfrak{s})$.
\end{definition} 

When $Y$ is understood from the context, then we drop it from the notation. We say a generator $[\mathbf{x},i,j]$ realizes $\Upsilon_{K,\mathfrak{s}}(t)$ if $[\mathbf{x},i,j]$ is a summand of a cycle $\theta$ satisfying the condition in Definition \ref{nu} and $\nu_{t}(K,\mathfrak{s})=f_{t}([\mathbf{x},i,j])$.


An initial observation is that $\Upsilon_{K,\mathfrak{s}}(0)=0$. Indeed, $f_{0}([\mathbf{x},i,j])=i$ is the algebraic filtration. 


\begin{theorem}[cf. {\cite[Theorem 7.1]{Liv17}}] \label{func}
Given $t\in [0,2]$,
\begin{enumerate}
\item $\Upsilon_{K,\mathfrak{s}}(t)$ is a continuous piecewise linear function.
\item If $\Upsilon_{K,\mathfrak{s}}(t)$ is differentiable at $t$, and a generator $[\mathbf{x},i,j]$ realizes $\Upsilon_{K,\mathfrak{s}}(t)$, then $\Upsilon'_{K,\mathfrak{s}}(t)=i-j=-A(\mathbf{x})$.
\item $\Upsilon_{K,\mathfrak{s}}(t)$ is not differentiable at $t$ only if at least two generators $[\mathbf{x},i,j]$, $[\mathbf{x'},i',j']$ realize $\Upsilon_{K,\mathfrak{s}}(t)$. Moreover, $(i,j)$ and $(i',j')$ lie on the same line of slope $1-\frac{2}{t}$. Let $$\bigtriangleup\Upsilon'_{K,\s}(t) = \lim_{t\searrow t_{0}}\Upsilon'_{K,\s}(t)-\lim_{t\nearrow t_{0}}\Upsilon'_{K,\s}(t),$$ then $\bigtriangleup\Upsilon'_{K,\s}(t)=\frac{2}{t}(i'-i)$.
\end{enumerate}
\end{theorem}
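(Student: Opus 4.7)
The plan is to follow Livingston's proof of \cite[Theorem 7.1]{Liv17}, adapted to the rational homology sphere setting by replacing the canonical generator of $\widehat{HF}(S^3)$ with the generator of $H_{d(Y,\s)}(\CFK^\infty(Y,K,\s)) \cong \F_2 \cdot \mathbf{1}$. Existence of minimizing cycles at every $t$ is ensured by the discreteness of $\mathcal{F}^t$ proved above.

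For part (1), note that for each cycle $\theta = \sum_k [\mathbf{x}_k, i_k, j_k]$ representing the distinguished class, the function
\[
F_t(\theta) = \max_k \bigl( (1 - t/2)\, i_k + (t/2)\, j_k \bigr)
\]
is a maximum of finitely many affine functions of $t$, hence continuous, convex, and piecewise linear on $[0,2]$. Then $\nu_t$ is a pointwise infimum of such functions. It inherits continuity and piecewise linearity once one shows that locally it is actually a minimum over \emph{finitely many} cycles. This finiteness follows from the uniform discreteness of $\mathcal{F}^t$ on a neighborhood $I$ of any fixed $t_0$: having chosen a minimizer $\theta_0$ at $t_0$, any cycle whose $F_t$-value ever drops below $F_t(\theta_0)$ on $I$ can only use generators in a bounded lattice region of the $(i,j)$-plane, and there are only finitely many such cycles to consider.

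For part (2), let $[\mathbf{x}, i, j]$ be an extremal generator of a minimizer $\theta_0$ at $t_0$, and set $L(t) := (1 - t/2) i + (t/2) j$, so $L(t_0) = \nu_{t_0}$ and $L$ has slope $(j-i)/2$. Since $\theta_0$ remains a valid cycle for all nearby $t$, $\nu_t \leq F_t(\theta_0)$ with $F_{t_0}(\theta_0) = \nu_{t_0}$. A standard convexity argument (following Livingston) shows that at a smooth $t_0$ the extremal generators of any minimizer must all share a common slope equal to $\nu'(t_0)$: the one-sided derivatives of $F_t(\theta_0)$ at $t_0$ are the maximum and minimum of the extremal slopes, and matching these to $\nu'(t_0^{\pm})$ forces both to coincide with $\nu'(t_0)$. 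Hence $\Upsilon'_{K,\s}(t_0) = -2\nu'(t_0) = i - j = -A(\mathbf{x})$.

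Part (3) is essentially the contrapositive: if $\Upsilon_{K,\s}$ has a corner at $t_0$, the one-sided slopes differ, and the argument above forces at least two generators $[\mathbf{x}, i, j]$ and $[\mathbf{x}', i', j']$ realizing $\nu_{t_0}$ with distinct $A$-values. Both satisfy $f_{t_0}(\cdot) = \nu_{t_0}$, so $(i, j)$ and $(i', j')$ lie on the level line of slope $1 - 2/t_0$ in the $(i,j)$-plane, giving $j' - j = (1 - 2/t_0)(i' - i)$. The jump formula then reduces to the direct algebraic identity
\[
\Delta\Upsilon'_{K,\s}(t_0) = (i' - j') - (i - j) = (i' - i) - (j' - j) = (2/t_0)(i' - i).
\]
The main technical obstacle is the finite-reduction underlying part (1) via uniform discreteness of $\mathcal{F}^t$; parts (2) and (3) are then essentially bookkeeping with affine functions and level lines.
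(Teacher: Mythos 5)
Your proposal is correct and takes essentially the same approach as the paper's proof, which (like yours) is a direct adaptation of Livingston's argument: use discreteness of $\mathcal{F}^t$ to reduce $\nu_t$ locally to a minimum over finitely many piecewise-linear-in-$t$ functions, then read off the slopes and jump formula from the coordinates $(i,j)$ of the realizing generators. One small caveat: in your part (2), the sandwich $F'_-(\theta_0)(t_0)\leq\nu'(t_0)\leq F'_+(\theta_0)(t_0)$ does not by itself force the one-sided derivatives of $F_t(\theta_0)$ to coincide (so "forces both to coincide" overstates what the convexity comparison gives), but the paper's own proof treats this point with the same informality by simply restricting to $t$ where the realizing lattice point is unique, so you are at the same level of rigor.
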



\begin{proof} The proof is essentially the same as \cite{Liv17}. Since $\mathcal{F}^{t}$ is discrete, for all but finitely many $t$ there is exactly one generator $[\mathbf{x},i,j]$ realizing $\Upsilon_{K}(t)$. For nearby $t$, say $t_{1}$, $\Upsilon_{K}(t_{1})$ is realized by the same generator $[\mathbf{x},i,j]$ so that $\nu_{t_{1}}(K,\mathfrak{s})=(1-\frac{t_{1}}{2})i+\frac{t_{1}}{2}j$. Written differently, 
$$\Upsilon_{K,\mathfrak{s}}(t)=-2\nu_{t}(K,\mathfrak{s})=(i-j)t-2i.$$
Therefore,
\begin{enumerate}
\item $\Upsilon_{K,\s}(t)$ is a continuous piece-wise linear function.
\item $\Upsilon'_{K,\mathfrak{s}}(t)=i-j=-A(\xx)$ if $(i,j)$ is the only lattice point that realizes $\Upsilon_{K,\s}(t)$. 
\item On the other hand, $\Upsilon_{K,\mathfrak{s}}(t)$ is not differentiable only if two generators $[\mathbf{x},i,j]$, $[\mathbf{x'},i',j']$ realize $\Upsilon_{K,\mathfrak{s}}(t)$ and $i-j\neq i'-j'$. Furthermore, $$\Upsilon_{K,\s}(t)=(i-j)t-2i=(i'-j')t-2i'.$$
Written differently,
$$\frac{j-j'}{i-i'}=1-\frac{2}{t}.$$
Therefore, $(i,j)$ and $(i',j')$ lie on the same line of slope $1-\frac{2}{t}$ in the $ij-$plane.

$\bigtriangleup\Upsilon'_{K,\s}(t) = (j-j')-(i-i')$, so $\bigtriangleup\Upsilon'_{K,\s}(t)=\frac{2}{t}(i'-i)$.
\end{enumerate}
\end{proof}

\begin{remark}
We define $\Upsilon_{K,\s}'(0)=\lim\limits_{t\searrow 0}\Upsilon_{K,\s}'(t)$ and $\Upsilon_{K,\s}'(2)=\lim\limits_{t\nearrow 2}\Upsilon_{K,\s}'(t)$.
\end{remark}

\begin{corollary}
$|\Upsilon'_{K,\mathfrak{s}}(t)|\leq g(K)$. 
\end{corollary}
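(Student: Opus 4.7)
The plan is to reduce the statement to a bound on the Alexander grading of a generator realizing $\Upsilon_{K,\mathfrak{s}}(t)$, and then invoke the genus-detection property of knot Floer homology. Since $\Upsilon_{K,\mathfrak{s}}$ is continuous and piecewise linear by Theorem \ref{func}(1), it is differentiable away from a finite subset of $[0,2]$, and at non-differentiable points the one-sided derivatives arise as limits of $\Upsilon'_{K,\mathfrak{s}}$ from nearby differentiable values of $t$. It therefore suffices to prove $|\Upsilon'_{K,\mathfrak{s}}(t)| \leq g(K)$ at points where $\Upsilon_{K,\mathfrak{s}}$ is differentiable.

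At such a $t$, Theorem \ref{func}(2) furnishes a generator $[\mathbf{x}, i, j]$ realizing $\Upsilon_{K,\mathfrak{s}}(t)$ and identifies $\Upsilon'_{K,\mathfrak{s}}(t) = -A(\mathbf{x}) = i - j$. Hence the corollary reduces to the claim that the realizing generator may be chosen with $|A(\mathbf{x})| \leq g(K)$. The essential input is the genus-detection property $\widehat{HFK}(K, \mathfrak{s}, s) = 0$ for $|s| > g(K)$, which holds for null-homologous knots in rational homology spheres. Given any minimizing cycle $\theta$ with $F_t(\theta) = \nu_t(K, \mathfrak{s})$, a summand $[\mathbf{x}, i, j]$ with $|j-i| > g(K)$ is a boundary within the Alexander-graded subquotient of $\CFK^\infty$ at level $A(\mathbf{x})$; such summands can be successively cancelled by adding suitable $\partial^\infty$-boundaries, producing a new minimizing cycle whose extremal summand has Alexander grading in $[-g(K), g(K)]$. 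Applying Theorem \ref{func}(2) to this cycle then gives $|\Upsilon'_{K,\mathfrak{s}}(t)| = |A(\mathbf{x})| \leq g(K)$.

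The main obstacle is the cancellation step, which requires filtration bookkeeping: one must verify that the lifts of Alexander-level null-homologies to $\partial^\infty$-boundaries can be chosen to remain within the filtration level $\mathcal{F}^t_{\nu_t}$, so that removing an offending summand does not force the appearance of a new summand with larger $f_t$-value. This is a technical but routine verification using that $f_t$ is monotonic along $\partial^\infty$-arrows (the arrows point non-strictly down and to the left in the $(i,j)$-plane). Once the reduction is established, the bound is immediate from Theorem \ref{func}(2), and the non-differentiable case follows by continuity.
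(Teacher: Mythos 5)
Your approach is the same as the paper's: apply Theorem~\ref{func}(2) to identify $\Upsilon'_{K,\mathfrak{s}}(t)$ with $-A(\mathbf{x})$ for a realizing generator, and then bound $|A(\mathbf{x})|$ by $g(K)$. The paper's proof is a one-liner that simply asserts $|A(\mathbf{x})|\leq g(K)$, implicitly treating the knot Floer complex as one whose generators all lie in Alexander gradings between $-g$ and $g$. You correctly notice that this needs justification for an arbitrary representative of the filtered chain homotopy type, which is a reasonable thing to worry about and which the paper suppresses.

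There is, however, a small imprecision in your cancellation sketch. You assert that a summand $[\mathbf{x},i,j]$ with $|j-i|>g(K)$ ``is a boundary within the Alexander-graded subquotient,'' but an individual summand need not even be a cycle there, so ``is a boundary'' is not automatic. What is true is the following: if you collect all summands of the minimizing cycle $\theta$ lying at the maximal Alexander grading $s_0$ with $|s_0|>g(K)$, this collection is a cycle in the associated graded at level $s_0$ (because the graded piece of $\partial^\infty\theta=0$ at that level is precisely the graded differential applied to it), and since $\widehat{\textit{HFK}}$ vanishes in grading $s_0$ the associated graded there is acyclic, so that collection is a boundary and can be cancelled as you describe. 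Alternatively, and more in keeping with how the paper handles similar issues (compare Proposition~\ref{prop:contactupsilon}), one simply notes that $\Upsilon$ depends only on the filtered chain homotopy type of $\CFK^\infty$, and replaces the complex by a reduced representative whose generators are in bijection with a basis for $\widehat{\textit{HFK}}$; every generator of such a representative has Alexander grading in $[-g(K),g(K)]$, and the bound is then immediate from Theorem~\ref{func}(2). Either fix closes the gap, and the rest of your argument (reducing to differentiable $t$ by continuity and piecewise linearity) is fine.
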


\begin{proof} If $[\xx,i,j]$ realizes $\Upsilon_{K,\s}(t)$, then $|\Upsilon'_{K,\mathfrak{s}}(t)|=A(\xx)\leq g(K)$.
\end{proof}


\begin{theorem}[Proposition 4.1 of \cite{AlfieriCeloriaStipsicz}] \label{basic}
The $\Upsilon-$invariant satisfies the following properties: 
\begin{enumerate}
\item $\Upsilon_{Y\# Y',K\# K',\mathfrak{s}\#\mathfrak{s'}}(t)=\Upsilon_{Y,K,\mathfrak{s}}(t)+\Upsilon_{Y',K',\mathfrak{s'}}(t)$. 
\item $\Upsilon_{Y,K,\mathfrak{s}}(t)=-\Upsilon_{-Y,-K,\overline{\mathfrak{s}}}(t)$
\end{enumerate}
\end{theorem}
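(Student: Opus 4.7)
The plan is to derive both properties by combining the minimization definition of $\nu_t$ from Definition \ref{nu} (recall $\Upsilon_{Y,K,\s}(t) = -2\nu_t(Y,K,\s)$) with two standard structural facts about the knot Floer complex: the filtered K\"unneth isomorphism
\[ CFK^\infty(Y \# Y', K \# K', \s \# \s') \simeq CFK^\infty(Y,K,\s) \otimes_{\mathbb{F}_2[U,U^{-1}]} CFK^\infty(Y',K',\s'), \]
together with the orientation-reversal duality $CFK^\infty(-Y,-K,\overline{\s}) \simeq \mathrm{Hom}_{\mathbb{F}_2[U,U^{-1}]}(CFK^\infty(Y,K,\s), \mathbb{F}_2[U,U^{-1}])$. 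Both operations transform the bifiltration and the Maslov grading predictably, and the $d$-invariant is additive under connected sum and sign-reversing under orientation reversal.

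For part (1), the key observation is that the weight function $f_t(i,j) = (1-\tfrac{t}{2})i + \tfrac{t}{2}j$ is additive in the bifiltration, so $F_t(\theta \otimes \theta') = F_t(\theta) + F_t(\theta')$ on bi-homogeneous tensors. For the upper bound $\nu_t(K \# K') \leq \nu_t(K) + \nu_t(K')$, I would pick cycles $\theta$ and $\theta'$ realizing $\nu_t(Y,K,\s)$ and $\nu_t(Y',K',\s')$ in the bottom Maslov gradings $d(Y,\s)$ and $d(Y',\s')$ respectively, and observe that $\theta \otimes \theta'$ is a cycle in the tensor product complex at Maslov grading $d(Y,\s) + d(Y',\s') = d(Y \# Y', \s \# \s')$ whose class is nonzero by the K\"unneth formula for $HF^\infty$. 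For the reverse inequality, I would pass to a filtered simplified basis in the sense of Hom \cite{Hom}, in which cycles of the tensor complex decompose into pure tensors of basis cycles modulo boundaries; minimality of $F_t$ in each factor then forces $\nu_t(K \# K') \geq \nu_t(K) + \nu_t(K')$.

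For part (2), the dual complex $CFK^\infty(-Y, -K, \overline{\s})$ has generators dual to those of $CFK^\infty(Y, K, \s)$ with both bifiltration coordinates and the Maslov grading negated; in particular $d(-Y, \overline{\s}) = -d(Y, \s)$. A cycle in $CFK^\infty(-Y, -K, \overline{\s})$ at Maslov grading $d(-Y, \overline{\s})$ representing a nonzero homology class corresponds, via the natural pairing, to a cocycle in $CFK^\infty(Y, K, \s)$ at Maslov grading $d(Y, \s)$ that evaluates nontrivially on any minimizing cycle; tracking $F_t$ through the duality (where maxima become minima after negation) then identifies $\nu_t(-Y, -K, \overline{\s}) = -\nu_t(Y, K, \s)$. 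The main obstacle is the lower bound in (1): extracting a pure-tensor representative for a minimizing cycle is delicate because $F_t$ is only a filtration, not a grading, so distinct chain-level summands can coexist at the same $F_t$-level and must be separated by a careful cancellation argument in a simplified basis rather than by a purely formal K\"unneth manipulation.
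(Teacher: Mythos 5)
Your argument is correct and follows the same conceptual route as the paper: the K\"unneth-type filtered chain homotopy equivalence plus additivity of $\nu_t$ for part~(1), and duality of the complex under orientation reversal for part~(2). The paper's proof simply invokes Livingston's Theorem~6.1 and Alfieri--Celoria--Stipsicz's Proposition~4.1 for these steps, while you spell out the underlying reduced-basis and dual-complex arguments, which are exactly the ideas behind those cited results.
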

\begin{proof}
For part (a), the complex $CFK^{\infty}(Y\# Y',K\# K',\mathfrak{s}\#\mathfrak{s'})$ is bifiltered chain homotopy equivalent to $CFK^{\infty}(Y,K,\mathfrak{s})\otimes CFK^{\infty}(Y',K',\mathfrak{s'})$. If $(C,\mathcal{F})$ and $(C',\mathcal{F}')$ are two filtered complexes, there is a natural filtration $\mathcal{F}\otimes\mathcal{F}'$ on $C\otimes C'$:
\[(C\otimes C')_{s}=\text{Image}(\oplus_{s=s_{1}+s_{2}}C_{s_{1}}\otimes C'_{s_{2}}\rightarrow C\otimes C').\]
It follows from Theorem 6.1 in \cite{Liv17} that $\nu_{t}$ is additive for each $t$. Hence \[\Upsilon_{Y\# Y',K\# K',\mathfrak{s}\#\mathfrak{s'}}(t)=\Upsilon_{Y,K,\mathfrak{s}}(t)+\Upsilon_{Y',K',\mathfrak{s'}}(t).\]

Part (b) (and also (a)) follows from Proposition 4.1 of \cite{AlfieriCeloriaStipsicz}.
\end{proof}

\begin{proposition}
$\Upsilon_{K,\mathfrak{s}}(t)=\Upsilon_{K,\mathfrak{s}}(2-t)$.
\end{proposition}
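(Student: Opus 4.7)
My proof plan relies on the fundamental symmetry of the knot Floer complex under exchanging the two basepoints $w$ and $z$, which geometrically swaps the roles of the $i$- and $j$-filtrations. For a null-homologous knot $K$ in a rational homology sphere $Y$, this symmetry should produce a filtered chain homotopy equivalence
\[ \sigma : CFK^\infty(Y,K,\mathfrak{s}) \longrightarrow CFK^\infty(Y,K,\mathfrak{s}) \]
that is absolutely Maslov-grading preserving and sends a generator $[\mathbf{x},i,j]$ to a generator of the form $[\mathbf{x}',j,i]$ (this is the $(i,j) \mapsto (j,i)$ symmetry used by Livingston in the $S^3$ setting, and it is the analogue that Alfieri--Celoria--Stipsicz extend to null-homologous knots in rational homology spheres). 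In particular, $\sigma$ carries cycles in Maslov grading $d(Y,\mathfrak{s})$ representing the generator of $HF^\infty(Y,\mathfrak{s})$ to cycles with the same property.

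The key computation is then purely formal. For a generator $[\mathbf{x},i,j]$, observe that
\[ f_{2-t}(j,i) \;=\; \left(1 - \tfrac{2-t}{2}\right)j + \tfrac{2-t}{2}\,i \;=\; \tfrac{t}{2}\,j + \left(1 - \tfrac{t}{2}\right)i \;=\; f_t(i,j). \]
Hence for any chain $\theta$ in $CFK^\infty(Y,K,\mathfrak{s})$, one has $F_{2-t}(\sigma(\theta)) = F_t(\theta)$.

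Now apply this to the minimization in Definition \ref{nu}. Let $\theta$ be a cycle in Maslov grading $d(Y,\mathfrak{s})$ with $[\theta] \neq 0 \in H_*(C)$ realizing $\nu_t(Y,K,\mathfrak{s})$. Then $\sigma(\theta)$ is such a cycle for the $(2-t)$ minimization, so
\[ \nu_{2-t}(Y,K,\mathfrak{s}) \leq F_{2-t}(\sigma(\theta)) = F_t(\theta) = \nu_t(Y,K,\mathfrak{s}). \]
Running the same argument with $\sigma^{-1}$ and $t$ replaced by $2-t$ gives the reverse inequality, so $\nu_t = \nu_{2-t}$ and therefore $\Upsilon_{K,\mathfrak{s}}(t) = \Upsilon_{K,\mathfrak{s}}(2-t)$.

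The main obstacle is the first paragraph, namely invoking (or citing from \cite{AlfieriCeloriaStipsicz}) the basepoint-swap symmetry of $CFK^\infty$ in the rational homology sphere setting, with the subtlety that one must check it is Maslov-grading preserving and respects the $\mathrm{Spin}^c$-structure decomposition as stated; once that structural input is in hand, the rest is the one-line comparison $f_{2-t}(j,i) = f_t(i,j)$ and a direct substitution into Definition \ref{nu}.
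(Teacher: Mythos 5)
Your proposal is correct and takes essentially the same approach as the paper, which proves the symmetry in one line by invoking the basepoint swap; you have simply spelled out the filtered symmetry $\sigma$ and the elementary identity $f_{2-t}(j,i)=f_t(i,j)$ that make that invocation go through.
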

\begin{proof}
This follows immediately from switching the role of base points $w$ and $z$. 
\end{proof}

\section{Fibered Knots}\label{sec:fiberedknots}

We start by reviewing the definition of a right-veering surface diffeomorphism \cite{HKM07}. 

\subsection{Right-veering diffeomorphisms}

Let $\Sigma$ be a compact oriented surface with boundary $\partial\Sigma$, and let $\alpha$, $\beta:[0,1]\rightarrow\Sigma$ be properly embedded oriented arcs with $\alpha (0)=\beta (0)=x\in\partial\Sigma$. Isotope $\alpha$ and $\beta$ so that they intersect transversely with the fewest possible number of intersections. We say that $\beta$ is to the right of $\alpha$ if $(\dot{\beta}(0),\dot{\alpha}(0))$ define the orientation of $\Sigma$ at $x$.

\begin{definition}\label{def:rv}
Let $\phi:\Sigma\rightarrow\Sigma$ be a diffeomorphism which restricts to the identity map on the boundary $\partial\Sigma$. Let $\alpha$ be a properly embedded oriented arc starting at a base point $x\in\partial\Sigma$. Then we say $\phi$ is right-veering if for an arbitrary base point $x$ and arc $\alpha$, $\phi(\alpha)$ is always to the right of $\alpha$. 
\end{definition}

\subsection{Knot Floer homology of fibered knots} 

Let $K$ be the binding of an open book $(\Sigma,\phi)$ of $Y$ compatible with a contact structure $\xi$. A basis for $\Sigma$ is a collection $\{a_{1},...,a_{2g}\}$ of disjoint, properly embedded arcs in $\Sigma$ whose complement is a disk. Let $b_{i}$ be an isotopic copy of $a_{i}$ obtained by shifting the end points of $a_{i}$ in the direction of $K$ so that $b_{i}$ intersects $a_{i}$ at a single point $c_{i}$. Following \cite{HKM09}, we form a pointed Heegaard diagram 
\[(S,\mathbf{\alpha}=(\alpha_{1},...,\alpha_{2g}),\mathbf{\beta}=(\beta_{1},...,\beta_{2g}),w)\]
for $Y$ by doubling the open book:
\begin{itemize}
\item $S=\Sigma\cup -\Sigma$ is the union of two copies of $\Sigma$ glued along the binding $K$,
\item $\alpha_{i}=a_{i}\cup a_{i}$,
\item $\beta_{i}=b_{i}\cup \phi(b_{i})$,
\item the base point $w$ lies outside of the strip from the isotopies from $a_{i}$ to $b_{i}$
\end{itemize}
\noindent
as shown in Figure \ref{heegaard}. 

\begin{figure}[h]
\centering
\includegraphics[scale=0.5]{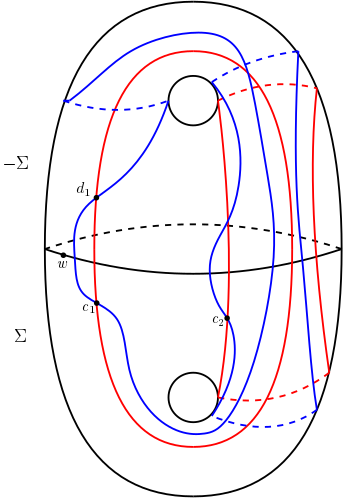}
\caption{A pointed Heegaard diagram compatible with an open book, following \cite{HKM09, BVV}. The curves $\alpha_{i}$ are shown in red and the $\beta_{i}$ are in blue.}\label{heegaard}
\end{figure}

Now we turn the Heegaard diagram into a doubly-pointed Heegaard diagram for $K\subset Y$. We perform finger moves on the $\mathbf{\beta}$ curves in a neighborhood of $\partial \Sigma$ in the direction of the orientation of $K$, and place the second base point $z$ inside the region of the isotopies: see Figure \ref{rvfigure}.
Later in the paper we will be most interested in the doubly-pointed Heegaard diagram $\mathcal{H} = (S, \mathbf{\beta}, \mathbf{\alpha}, z, w) $, obtained by reversing the roles of $\alpha$ and $\beta$ and the basepoints $w$ and $z$, for $-K \subset - Y$.

\begin{figure}[h]
\centering
\includegraphics[scale=0.6]{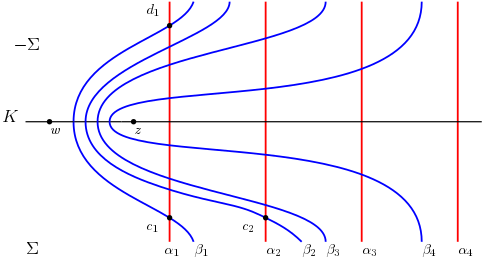}
\centering
\caption{A local picture of the doubly-pointed Heegaard diagram $\mathcal{H}$ for $K \subset Y$ after performing finger moves on the Heegaard diagram from Figure \ref{heegaard}.}\label{rvfigure}
\end{figure}

The following lemma by Baldwin and Vela-Vick \cite{BVV} characterizes the Alexander grading of generators.

\begin{lemma}[\cite{BVV}]
\label{lem:bvv}
The Alexander grading of a generator $\mathbf{x} \in \widehat{\CFK}(\mathcal{H})$ is the number of components in $-\Sigma\subset S$ minus $g$, where $g$ is the genus of $K$. 
\end{lemma}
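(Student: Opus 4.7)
The plan is to verify the formula in two steps: (a) match the relative Alexander gradings between generators up to an overall constant by a local analysis in the finger-move region, and (b) pin down the constant via a distinguished generator.

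For step (a), recall that for any $\phi\in\pi_2(\xx,\yy)$ one has $A(\xx)-A(\yy)=n_z(\phi)-n_w(\phi)$. Consider two generators of $\widehat{\CFK}(\mathcal{H})$ that differ in a single coordinate, where the swap exchanges the intersection points paired across one of the small bigons produced by a finger move in Figure \ref{rvfigure}. By construction, these elementary swaps carry a coordinate between $\Sigma$ and $-\Sigma$, and the corresponding Whitney disk is supported in the finger region, where---by the placement of the basepoints in $\mathcal{H}$---exactly one of $z,w$ appears with multiplicity one. A direct inspection of the local picture shows that the sign of $n_z-n_w$ matches the change in the count of coordinates lying in $-\Sigma$. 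Since any two generators in $\mathbb{T}_\beta\cap\mathbb{T}_\alpha$ are connected by such elementary swaps together with moves whose bigons lie entirely within $\Sigma$ or entirely within $-\Sigma$ (which contribute nothing to $n_z-n_w$), the quantity $A(\xx) - \#\{\text{coordinates of } \xx \text{ in } -\Sigma\}$ is independent of $\xx$.

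For step (b), I would evaluate both sides on the Honda--Kazez--Mati\'c contact-class generator $\xx_c=\{c_i\}$, each of whose $2g$ coordinates lies in $\Sigma$, so that the right-hand side is $-g$. To verify $A(\xx_c)=-g$, one route combines Ni's genus-detection theorem (which bounds $|A|\leq g$) with the conjugation symmetry of knot Floer homology and the identification of $\xx_c$ with the Ozsv\'ath--Szab\'o contact invariant in $\widehat{HFK}(\mathcal{H})$, which lives in the extreme Alexander grading. Alternatively, one computes $A(\xx_c)$ directly via the Chern-class formula $A(\xx) = \tfrac{1}{2}\langle c_1(\s_w(\xx)), [\hat F]\rangle$ applied to the capped-off fiber surface $\hat F$ in the zero-surgery on $K$, using that $\s_w(\xx_c)$ is identified with the canonical contact $\mathrm{Spin}^c$ structure of the fibration, for which the pairing evaluates to $-2g$. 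Either way, the constant is $-g$ and the lemma follows.

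The main obstacle is the bookkeeping of orientation conventions, since $\mathcal{H}$ is the diagram for $-K\subset -Y$ and has $\alpha\leftrightarrow\beta$ and $w\leftrightarrow z$ swapped relative to the original diagram for $K\subset Y$. These reversals affect the sign of the local bigon contribution in step (a) and the Alexander grading of $\xx_c$ in step (b). Once the conventions are fixed following Baldwin--Vela-Vick, the two sign reversals cancel, and each step reduces to essentially direct inspection of Figure \ref{rvfigure} together with one Chern-class or symmetry argument.
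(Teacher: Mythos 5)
The paper does not prove this lemma; it cites it directly from Baldwin--Vela-Vick, so there is no in-paper argument to compare against. Evaluating your proposal on its own: the outline (compute the relative Alexander grading, then normalize on the contact generator) is the right shape, but step (a) has a genuine gap. You assert that any two generators of $\mathbb{T}_\beta\cap\mathbb{T}_\alpha$ are connected by a chain of elementary single-coordinate swaps through small bigons (finger-move bigons plus bigons wholly inside $\Sigma$ or wholly inside $-\Sigma$). That connectivity claim is not true in general: two generators in the same $\mathrm{Spin}^c$ class are connected by \emph{some} Whitney disk, but that domain need not decompose as a concatenation of bigons, and the generator graph under bigon moves is typically disconnected. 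The correct mechanism is a global one: for an arbitrary $\phi\in\pi_2(\xx,\yy)$ one computes $n_z(\phi)-n_w(\phi)$ as an algebraic intersection number of one side of $\partial\phi$ with a fixed reference arc joining $w$ to $z$, and then verifies that, for the specific placement of $w$ and $z$ in $\mathcal{H}$, that intersection count records precisely the change in the number of coordinates lying in $-\Sigma$ (because the finger-move strip is the only place where the relevant boundary arcs can cross from $\Sigma$ to $-\Sigma$ without passing $w$). This is where all the actual combinatorics lives, and it is not captured by a swap-by-swap argument.

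Step (b) is plausible as a normalization but leans on nontrivial imports. Your first route (Ni's bound, conjugation symmetry, and ``the contact class sits in the extreme grading'') silently uses that $\cc$ is a cycle whose class is nonzero and equals the Ozsv\'ath--Szab\'o contact invariant; your second route requires the identification $\s_w(\cc)=\s_\xi$ and the Chern-class evaluation on $\hat F$, neither of which is free. In Baldwin--Vela-Vick the Alexander grading of $\cc$ is obtained directly from the diagram, so the normalization is combinatorial and does not presuppose these deeper facts. If you want a self-contained proof of the lemma, step (b) should also be done combinatorially, otherwise there is a risk of circularity with the statements (such as Proposition \ref{prop:c}) that the lemma is meant to support.
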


\begin{proposition}[\cite{BVV}]
If $A(\mathbf{x})=-g$, then every component of $\xx$ lies in $\Sigma$.
\end{proposition}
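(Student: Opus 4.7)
The plan is to deduce this directly from the preceding Lemma \ref{lem:bvv}. First, I would observe that every component of a generator $\xx \in \mathbb{T}_\alpha \cap \mathbb{T}_\beta$ is an intersection point of some $\alpha_i$ with some $\beta_j$, and by construction of the doubly-pointed Heegaard diagram $\mathcal{H}$ each such intersection point lies in the interior of exactly one of the two pages $\Sigma$ or $-\Sigma$ that make up $S = \Sigma \cup -\Sigma$. This is because the finger moves depicted in Figure \ref{rvfigure} push the $\beta$-curves off of the binding $\partial\Sigma$, so no intersection point sits on the binding itself. Consequently, for any generator $\xx$ the set of its $2g$ components is partitioned into those lying in $\Sigma$ and those lying in $-\Sigma$.

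With this in place, I would simply apply Lemma \ref{lem:bvv}, which identifies $A(\xx)$ with the number of components of $\xx$ in $-\Sigma$ minus $g$. The hypothesis $A(\xx) = -g$ then forces the number of components of $\xx$ lying in $-\Sigma$ to equal zero, so every component of $\xx$ must lie in $\Sigma$, as desired. There is no serious obstacle: the proposition is essentially the statement of Lemma \ref{lem:bvv} specialized to the minimal possible Alexander grading, and the only thing to verify is the dichotomy that each component lies in one page or the other, which is immediate from the construction of $\mathcal{H}$.
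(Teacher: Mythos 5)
Your proof is correct and matches what the paper intends: the proposition is cited from BVV with no explicit argument given, and it is indeed an immediate consequence of Lemma \ref{lem:bvv} once one notes that every component of a generator lies in the interior of exactly one of the two pages, so that $A(\xx) = -g$ forces zero components in $-\Sigma$. The only small imprecision is attributing the interior-of-a-page dichotomy to the finger moves; it really comes from the original construction (the arcs $a_i, b_i$ meet transversely at interior points $c_i$, and the finger moves only add further interior intersection points), but this does not affect the validity of the argument.
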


We observe that each component of $\xx$ is therefore either a $c_{i}$ or is a new intersection point of the $\alpha$ and $\beta$ curves coming from the finger move. We show in Figure \ref{fig:newintersection} an example of how these new intersection points can arise.

\begin{figure}[h]
\centering
\includegraphics[scale=0.6]{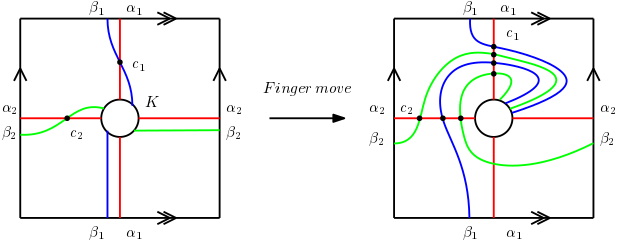}
\centering
\caption{$\Sigma$ is shown on the left before the finger move, and on the right after. New intersection points arise between the $\alpha$ and $\beta$ curves.}\label{fig:newintersection}
\end{figure}

If $\phi$ is not right-veering, then from \cite{HKM09} there exists a non-separating arc $a_{1}$ such that $\phi (a_{1})$ is sent to the left of $a_{1}$. Since $a_1$ is non-separating, the arc $a_{1}$ can be completed to a basis $\{a_{1},...a_{2g}\}$ for $\Sigma$. We form the Heegaard diagram according to the above procedure for doubling the open book, starting with this basis $\{a_{1},...a_{2g}\}$.  

\begin{proposition}[{\cite{BVV}}]\label{prop:c} 
Suppose the monodromy $\phi$ of a fibered knot $K\subset Y$ is not right-veering. Let $\cc =  \{c_1, \dots, c_{2g} \} \in \widehat{\CFK}(\mathcal{H})$ be the contact generator for $-K \subset -Y$. Then $A(\cc) = -g$, where $g$ is the genus of $K$. Furthermore, there exists a generator $\mathbf{y} \in \widehat{\textit{CF}}(\mathcal{H})$ with Alexander grading $A(\mathbf{y})=1-g$ and $\widehat{\partial}(\mathbf{y}) = \cc$. 
\end{proposition}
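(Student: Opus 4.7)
The first assertion is immediate from Lemma \ref{lem:bvv}: each $c_i = a_i \cap b_i$ lies in $\Sigma \subset S$, so no component of $\cc$ lives in $-\Sigma$ and hence $A(\cc) = 0 - g = -g$. The rest of the argument will be devoted to constructing the killing generator $\yy$ and a Whitney disk from $\yy$ to $\cc$.

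I plan to build $\yy$ directly from the left-veering witness. Extending $a_1$ to a basis, the hypothesis that $\phi(a_1)$ sits to the left of $a_1$ forces the curves $\alpha_1 = a_1 \cup a_1$ and $\beta_1 = b_1 \cup \phi(b_1)$ in the doubled diagram to acquire an essential interior intersection point $y_1$ on the $-\Sigma$ side of $S$, coming from $a_1 \cap \phi(b_1) \subset -\Sigma$; this extra intersection is the Heegaard-diagrammatic incarnation of left-veeringness. Set $\yy = \{y_1, c_2, \dots, c_{2g}\}$. Since exactly one component of $\yy$, namely $y_1$, lies in $-\Sigma$, Lemma \ref{lem:bvv} gives $A(\yy) = 1 - g$.

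To establish $\widehat{\partial}(\yy) = \cc$, I would exhibit an embedded Whitney disk $D \in \pi_2(\yy, \cc)$ of Maslov index one with $n_w(D) = 0$ that contributes unit multiplicity to the differential. Concretely, $D$ has as its domain the bigon bounded by arcs of $\alpha_1$ and $\beta_1$ running from $y_1$ to $c_1$ through the wedge-shaped region cobounded by $a_1$ and $\phi(a_1)$ on the $-\Sigma$ side; the remaining coordinates $c_2, \dots, c_{2g}$ are common to $\yy$ and $\cc$ and contribute nothing to the domain. Because $w$ is placed outside the strip of isotopies from $a_i$ to $b_i$ (cf.\ Figure \ref{rvfigure}) and the bigon is disjoint from that strip, $D$ avoids $w$.

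The principal obstacle will be uniqueness: to conclude $\widehat{\partial}(\yy) = \cc$ I must verify that the mod-$2$ count of $w$-avoiding holomorphic representatives in $\pi_2(\yy, \cc)$ is odd, ideally by showing that $D$ is the unique such disk. I plan to invoke the standard contact-corner local model at each intersection $c_i$ for $i \geq 2$ used in \cite{HKM09, BVV}: the $\alpha$- and $\beta$-arcs meet at $c_i$ in a configuration through which no $w$-avoiding disk domain can pass, confining any contributing disk to the wedge between $a_1$ and $\phi(a_1)$. Since that wedge is itself an embedded disk, $D$ is the unique contribution, which completes the proof.
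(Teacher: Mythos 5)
Your proposal is correct and follows essentially the same route as the paper's proof, which likewise defers to the proof of Theorem 1.1 in \cite{BVV}: extend the left-veering arc to a basis, locate the extra intersection point on the $-\Sigma$ side (the paper calls it $d_1$, you call it $y_1$), read off the Alexander gradings from Lemma~\ref{lem:bvv}, and use the bigon together with the fact that every other Maslov-index-one disk passing through the $c_i$ corners must hit $w$. The paper is terser and leans on \cite{BVV} for the uniqueness of the contributing disk; your sketch of the contact-corner local-model argument supplies what the paper treats as a citation, so the two are substantively the same.
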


\begin{proof} The existence of the generator $\mathbf{y} = \{ d_1, c_2, \dots, c_{2g}\}$ with $\widehat{\partial}(\mathbf{y}) = \cc$ and Alexander grading $A(\mathbf{y})=1-g$ follows from the proof of Theorem 1.1 of \cite{BVV}: see Figure \ref{rvfigure}. Indeed, if $\phi$ is not right-veering, then $\alpha_{1}$ and $\beta_{1}$ form a bigon with corners at $c_{1}$, $d_{1}\subset \alpha_{1}\cap\beta_{1}$. Notice here we apply $\phi$ on $-\Sigma$, so $\phi (a_{1})$ is to the right of $a_{1}$. Every other holomorphic disk with corners containing $\{c_{2},...c_{2g}\}$ must contain $w$, so $\widehat{\partial}({\mathbf{y}})=\cc$. See \cite{BVV} for details.
\end{proof}

\begin{proposition} 
\label{prop:contactupsilon}
Suppose $\xx$ is a generator of $\widehat{\textit{CF}}(\mathcal{H})$ for $-K \subset -Y$ with $A(\xx) = -g$. Let $\xx \neq \cc$, where $\cc = \{ c_1, \dots, c_{2g}\} $ is the contact generator.
Then there exists a filtered chain homotopy equivalence 
\[\CFK^\infty(K) \simeq J \oplus A\]
where $A$ is acyclic, that is, $H_*(A) = 0$, and the image of $\xx$ lies in the acyclic summand $A$.
In particular, the generator $\xx$ cannot contribute to $\Upsilon_{-Y, -K, \s}(t)$.
\end{proposition}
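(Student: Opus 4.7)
The strategy is to identify a cancelling partner for $\xx$ coming from the local geometry of the finger move, then apply a standard filtered cancellation lemma to split off an acyclic summand containing $\xx$. Because $A(\xx) = -g$, the proposition immediately preceding Proposition \ref{prop:c} forces every component of $\xx$ to lie in $\Sigma \subset S$; hence each is either an old intersection point $c_i$ or one of the new $\alpha_k \cap \beta_k$ intersection points created by performing the finger move on $\beta_k$. Since $\cc = \{c_1, \dots, c_{2g}\}$ and $\xx \neq \cc$, at least one component of $\xx$, call it $p$, must be a new intersection point on some $\alpha_k \cap \beta_k$.

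First I would analyze the local picture near $p$. As illustrated in Figures \ref{rvfigure} and \ref{fig:newintersection}, each finger move produces a small embedded bigon $B \subset S$ with boundary on $\alpha_k \cup \beta_k$ joining two adjacent new intersection points; let $p'$ denote the other corner of this bigon and let $\xx'$ be the generator obtained from $\xx$ by replacing $p$ by $p'$. Then $B$, together with constant disks on the remaining components, yields a Maslov index one domain from $\xx$ to $\xx'$. Because $w$ lies outside the isotopy region, $n_w(B) = 0$; and because $-g$ is the minimum Alexander grading achievable in the diagram (Lemma \ref{lem:bvv}), the relation $A(\xx) - A(\xx') = n_z(B) - n_w(B) \geq 0$ combined with $A(\xx) = -g$ forces $A(\xx') = -g$ and $n_z(B) = 0$. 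The Riemann mapping theorem then provides a unique holomorphic representative of $B$, contributing coefficient one to $\partial^\infty$ with neither $U$-weight nor filtration shift.

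Next I would apply the bifiltered cancellation (reduction) lemma: whenever two generators in a bifiltered chain complex are joined by a coefficient-one differential preserving both filtrations, a bifiltered change of basis splits off the acyclic summand they span (together with its $U$-orbit in $\CFK^\infty$). This produces $\CFK^\infty \simeq J \oplus A$ with $\xx, \xx' \in A$ and $H_*(A) = 0$.

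Finally, for the \emph{in particular} statement, let $\theta$ be any minimizing cycle in the sense of Definition \ref{nu}. Writing $\theta = \theta_J + \theta_A$ under the splitting, both $\theta_J$ and $\theta_A$ are cycles, and since $H_*(A) = 0$, $\theta_A$ is a boundary, so $\theta_J$ represents the same homology class as $\theta$. Moreover $F_t(\theta_J) \leq F_t(\theta)$, so $\theta_J$ is itself a minimizing cycle, and $\xx$ does not appear as a summand of $\theta_J$. Hence $\xx$ can be removed from any realizing cycle without increasing $F_t$. The main obstacle will be the diagrammatic bookkeeping in the first step --- verifying that a unique small bigon $B$ is available for each new intersection point appearing in $\xx$, and that its intersection numbers with the basepoints are as claimed --- which requires a careful local analysis of the finger-move picture in the style of \cite{BVV}.
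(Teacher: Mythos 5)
Your proposal takes a genuinely different route from the paper, and it contains a real gap. The paper does \emph{not} identify explicit cancelling bigons. Instead, it cites the computation from Baldwin--Vela-Vick that $H_*(\mathcal{F}_{-g}) = \F\langle \cc\rangle$, i.e. that the homology of the filtered subcomplex at the bottom Alexander level is one-dimensional and generated by the contact class. It then invokes the filtered reduction procedure of Hom (Lemma~2.1 of the cited paper): the vertical complex $C^\vertical = C\{i=0\}$ has a set $B_0$ of generators with a nontrivial Alexander-grading-preserving differential, and the subquotient $B_0 \cup \partial B_0$ is an acyclic summand that can be split off by a filtered change of basis. Because $\cc$ is a cycle with nontrivial homology class, $\cc \notin B_0 \cup \partial B_0$; because $H_*(\mathcal{F}_{-g}) = \F\langle\cc\rangle$, every other generator at Alexander grading $-g$ is in $B_0 \cup \partial B_0$ and is discarded. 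The diagrammatic heavy lifting is thus entirely outsourced to the cited BVV result.

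Your sketch, by contrast, attempts to rederive the relevant part of that BVV computation from scratch, and as written it has gaps that you yourself flag at the end but do not resolve. Concretely: (i) You assert the small bigon $B$ is a positive domain \emph{from} $\xx$ \emph{to} $\xx'$; if instead $B \in \pi_2(\xx', \xx)$, your inequality $A(\xx) - A(\xx') = n_z(B) - n_w(B) \geq 0$ is replaced by $A(\xx') - A(\xx) = n_z(B) \geq 0$, which does \emph{not} force $n_z(B) = 0$ and hence does not force filtration-level preservation. (ii) Even when the bigon does contribute a unique holomorphic representative, you have not ruled out other Maslov-index-one domains from $\xx$ to $\xx'$ with $n_w = n_z = 0$ that could cancel the bigon's contribution modulo $2$; this is precisely the kind of thing that requires the careful analysis BVV carry out (note their statement, quoted in the proof of Proposition~\ref{prop:c}, that ``every other holomorphic disk with corners containing $\{c_2,\dots,c_{2g}\}$ must contain $w$''). (iii) When $\xx$ contains several new intersection points, after cancelling one pair the induced differential on the smaller complex changes, and your argument does not track whether the remaining new-intersection components still admit bigon partners in the reduced complex. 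If one fills in these details carefully one is in effect reproving $H_*(\mathcal{F}_{-g}) = \F\langle\cc\rangle$; the cleaner move is to cite it, as the paper does, and then the only remaining work is the algebraic observation about $B_0 \cup \partial B_0$.
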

\begin{proof}
We abbreviate $\CFK^\infty(-Y, -K) = C$ and let $C^\vertical = C\{ i = 0 \}$ and let $\partial = \partial^\vertical$ be the induced differential. 
As in the proof of Lemma 2.1 of \cite{Hom}, we let
\[B_0 = \{ \xx_i \in C^\vertical \mid \partial \xx_i \neq 0, \text{ and } A(\partial \xx_i) = A(\xx_i) \}.\]
Note that the contact generator $\cc \notin B_0$, as $\cc$ is a cycle in $C^\vertical \cong \widehat{\textit{CF}}(-Y)$ and $\widehat{\partial} \cc = 0$. 
As in the proof of Lemma 2.1 of \cite{Hom}, we have a filtered change of basis for $\CFK^{-}(-Y, -K)$ to a complex such that the acyclic summand $B_0 \cup \partial B_0$ may be discarded. This filtered change of basis for $\CFK^\infty(-Y, -K) \cong \CFK^{-}(-Y, -K) \otimes \F[U, U^{-1}]$ gives a filtered chain homotopy equivalence to a complex $J \oplus A$, where $A \supset B_0 \cup \partial B_0$ is acyclic, i.e. $H_*(A) = 0$.

By \cite{BVV}, $H_*(\mathcal{F}_{-g}) = \F\langle \cc \rangle$. This implies any $\xx \neq \cc$ with $A(\xx) = -g$ is an element of $B_0 \cup \partial B_0$. Thus, such $\xx$ will be discarded in the filtered change of basis. Furthermore, such $\xx$ cannot contribute to $\Upsilon_{-Y,-K}(t)$, as the $\Upsilon_{-Y,-K}(t)$ invariant only depends on the knot Floer complex up to stable equivalence \cite{AlfieriCeloriaStipsicz}. 
\end{proof}

\section{Main Theorem}

In this section we prove the main theorem, which we restate here for convenience:

\newtheorem*{mainthm}{Theorem~\ref{thm:main}}
\begin{mainthm}
If $\Upsilon'_{K,\mathfrak{s}}(t)=-g$ for some $t\in [0,1)$, where $g$ is the genus of the fibered surface $\Sigma$, then the monodromy $\phi_K:\Sigma\rightarrow\Sigma$ is right-veering.
\end{mainthm}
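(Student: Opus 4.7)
The plan is to prove the contrapositive: assume $\phi$ is not right-veering and derive $\Upsilon'_{K,\s}(t)\neq -g$ for every differentiable $t\in[0,1)$. Via the conjugation symmetry of Theorem \ref{basic}(b), this reduces to showing $\Upsilon'_{-K,\overline{\s}}(t_0)\neq g$ at every differentiable $t_0\in[0,1)$; I work throughout in the Heegaard diagram $\mathcal H$ for $-K\subset -Y$ from Section \ref{sec:fiberedknots}. Suppose toward a contradiction that $\Upsilon'_{-K,\overline{\s}}(t_0)=g$. By Theorem \ref{func}(2) every realizer has Alexander grading $-g$, and by Proposition \ref{prop:contactupsilon} the only $A=-g$ generator that can contribute is the contact generator $\cc$. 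Since $M([\cc,i,i-g])=M(\cc)+2i$, the top Maslov grading $d=d(-Y,\overline{\s})$ contains a unique copy $[\cc,i_*,i_*-g]$ of $\cc$ (otherwise no realizer with $A=-g$ exists and we are already done). Using Proposition \ref{prop:contactupsilon} to absorb the other $A=-g$ generators into the acyclic summand, I choose a minimizing cycle $\theta$ of Maslov grading $d$ in which $[\cc,i_*,i_*-g]$ is the unique summand achieving $f_{t_0}=\nu_{t_0}$.

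Because $\phi$ is not right-veering, Proposition \ref{prop:c} supplies a generator $\yy$ with $A(\yy)=1-g$ such that $\widehat\partial(\yy)=\cc$, arising from a bigon with $n_w=0$ and $n_z=1$; its proof further shows that every other disk from $\yy$ has $n_w\geq 1$. Lifting to $\CFK^\infty$,
\[\partial^\infty[\yy,i_*,i_*+1-g] = [\cc,i_*,i_*-g]+\omega,\]
where every term of $\omega$ has $i$-coordinate at most $i_*-1$. The Maslov identity $M(\yy)-M(\cc)=\mu-2n_w=1$ places $[\yy,i_*,i_*+1-g]$ in grading $d+1$, so
\[\theta' := \theta + \partial^\infty[\yy,i_*,i_*+1-g]\]
is a cycle of Maslov grading $d$ in the same homology class as $\theta$, with the distinguished summand $[\cc,i_*,i_*-g]$ cancelled.

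It remains to show $F_{t_0}(\theta')<\nu_{t_0}$. Each term in $\omega$, coming from a disk with $n_w\geq 1$, satisfies
\[f_{t_0}\leq f_{t_0}[\yy,i_*,i_*+1-g]-(1-t_0/2)=i_*-1+(t_0/2)(2-g),\]
which is exactly $t_0-1$ less than $f_{t_0}[\cc,i_*,i_*-g]=i_*-(t_0/2)g$; this is strictly negative for $t_0<1$. Every other summand of $\theta$ already lies strictly below $\nu_{t_0}$ by uniqueness of the realizer, so $F_{t_0}(\theta')<\nu_{t_0}$, contradicting Definition \ref{nu}. The main obstacle, and the precise reason for the restriction $t<1$, is this final numerical comparison: at $t_0=1$ the bound degenerates to an equality and the cancellation no longer forces a strict decrease, which is exactly what prevents extending the theorem past $t=1$.
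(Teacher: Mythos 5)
Your proof is correct and takes essentially the same route as the paper's: it mirrors to $-K\subset -Y$, uses the bigon from non-right-veeringness to write $\partial^\infty\yy=\cc+\rho$ with $\rho$ at algebraic filtration at most $-1$, derives the identical $t-1$ estimate, and invokes Proposition \ref{prop:contactupsilon} to exclude other Alexander-grading $-g$ contributors. The only stylistic difference is that you fix a minimizing cycle $\theta$ with $[\cc,i_*,i_*-g]$ as its unique summand at filtration level $\nu_{t_0}$ and derive a contradiction from $F_{t_0}(\theta')<\nu_{t_0}$, whereas the paper replaces an arbitrary cycle $\eta$ containing $U^{-m}\cc$ by $\delta=\eta+\partial^\infty U^{-m}\yy$ and argues directly that $\delta$ still represents at level $\leq F_t(\eta)$ without $\cc$; the underlying mathematics is the same.
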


In the statement of the theorem, we are assuming that $t$ is a non-singular point of $\Upsilon_K(t)$. 

\begin{proof}

Suppose the monodromy $\phi_K \colon \Sigma\rightarrow\Sigma$ associated to $K \subset Y$ is not right-veering. We will prove that $\Upsilon_{Y, K, {\mathfrak{s}}}'(t)\neq -g$ for $t\in [0,1)$. In fact, we will show that $\Upsilon_{-Y, -K,\overline{\mathfrak{s}}}'(t)\neq g$ for $t\in [0,1)$. The statement then follows from the behavior of $\Upsilon$ under mirroring \ref{basic}: $\Upsilon_{Y, K,\mathfrak{s}}(t)=-\Upsilon_{-Y, -K,\overline{\mathfrak{s}}}(t)$. When $Y = S^3$, $-K \subset -Y$ represents the mirror of $K$. 

Now we consider the complex $CFK^{\infty}(-Y, -K,\overline{\mathfrak{s}})$ associated to the Heegard diagram compatible with the non-right-veering open book $(\Sigma,\phi_K)$ as described in the previous section. 

Let  $\mathbf{c}$ be the contact generator from Proposition \ref{prop:c} with $A(\mathbf{c})=-g$. Throughout, $A(\mathbf{x})$ refers to the Alexander grading of a generator $[\mathbf{x}, 0, A(\mathbf{x})]$ of the knot Floer chain complex $\CFK^\infty(K)$.

Let $\eta\in CFK^{\infty}(-Y, -K,\mathfrak{s})$ be a cycle satisfying:
\begin{itemize}
\item $0 \neq [\eta]\in \textit{HFK}^{\infty}(-Y, -K,\overline{\mathfrak{s}})$ has Maslov grading $d(-Y,\overline{\mathfrak{s}})$. 
\item $\eta = U^{-m}\mathbf{c}+\eta'$, where the integer $m$ denotes the algebraic filtration level of the generator $U^{-m}\cc=[\cc, m, -g+m]$.
\end{itemize}
Note that since the $U$-action decreases Maslov grading by 2, the integer $m$ is uniquely determined. If such a cycle $\eta$ doesn't exist, then $U^{-m}\cc$ never realizes $\nu_{t}(-Y, -K,\overline{\mathfrak{s}})$ for any integer $m$. As a result, $\Upsilon_{-K,\overline{\mathfrak{s}}}'(t)\neq g$ for any $t\in [0,2]$. Thus, we may now assume such a cycle $\eta$ exists.

If $\phi_{K}:\Sigma\rightarrow\Sigma$ is not right-veering, then by Proposition \ref{prop:c} there is a generator $[\yy,0,A(\yy)]$ of $CFK^{\infty}(-Y, -K,\overline{\mathfrak{s}})$ such that 

\begin{itemize}
\item $\widehat{\partial}(\mathbf{y})=\mathbf{c}$,
\item A$(\mathbf{y})=1-g$.
\end{itemize}

Suppose $\partial^{\infty}\mathbf{y}=\cc + \rho$. If $\rho=0$, then $U^{-m}\cc$ is a boundary in $CFK^{\infty}(-Y,-K,\s)$ and does not contribute to $\Upsilon_{-Y,-K,\overline{\mathfrak{s}}}(t)$. 
Thus, we can assume $\rho\neq 0$. Since the differential respects the $i$ and $j$ filtration, $\rho\in C\{i\leq -1, j\leq 1-g\}$. Hence, $U^{-m}\rho\in C\{i\leq m-1,j\leq 1-g+m\}$, and  \[F_{t}(U^{-m}\rho)\leq (1-\frac{t}{2})(m-1)+\frac{t}{2}(1-g+m).\]
Therefore,
\[F_{t}(U^{-m}\rho)-f_{t}(U^{-m}\cc) \leq 
(1-\frac{t}{2})(m-1)+\frac{t}{2}(1-g+m)-(-\frac{t}{2}g+m) 
=t-1.
\]
It follows that when $t < 1$, we have 
\[F_t(U^{-m} \rho) < f_t(U^{-m} \cc).\]

Since $\partial^{\infty}$ is a differential, we have 
\begin{align}\label{cycle1}
\partial^{\infty}(\partial^{\infty}U^{-m}\mathbf{y})=\partial^{\infty}U^{-m}\mathbf{c}+\partial^{\infty}U^{-m}\rho=0.
\end{align}
Recall that $\eta=U^{-m}\cc+\eta'$ is also a cycle, hence 
\begin{align}\label{cycle2}
\partial^{\infty}\eta=\partial^{\infty}U^{-m}\mathbf{c}+\partial^{\infty}\eta'=0
\end{align}
as well. By adding equations (\ref{cycle1}) and (\ref{cycle2}) we have $$\partial^{\infty}(U^{-m}\rho +\eta')=0.$$
Therefore, $U^{-m}\rho+\eta' $ is also a cycle in $CFK^{\infty}(-Y, -K,\overline{\mathfrak{s}})$, denoted by $\delta$. Furthermore, $\delta = \eta +\partial^{\infty}U^{-m}\yy$; thus $[\delta]=[\eta]\neq 0$ has Maslov grading $d(-Y, \overline{\mathfrak{s}})$.

Let $t<1$. Recall $F_t(U^{-m} \rho) < f_t(U^{-m} \cc)$. Since $F_{t}(\eta)=\max(F_{t}(\eta'),f_{t}(U^{-m}\cc))$ and $F_{t}(\delta)= \max(F_{t}(\rho),F_{t}(\eta'))$, by a case-by-case analysis,
\[F_t(\eta) \geq F_t(\delta) \geq \nu_t(-Y, -K, \overline{\mathfrak{s}}).\] 
The cycle $\delta$ does not contain the contact generator $U^{-m}\cc$ as a summand. Thus, by Proposition \ref{prop:contactupsilon}, no summand of $\delta$ that realizes $\nu_t(-Y,-K, \overline{\mathfrak{s}})$ lies in Alexander grading $-g$. Thus, if $\delta$ realizes $\nu_t(-Y,-K, \overline{\mathfrak{s}})$, then the slope of $\Upsilon_{-Y,-K, \overline{\mathfrak{s}}}(t)$ cannot equal $g(K)$ for such $t$. Since such a cycle $\delta$ exists for any choice of cycle $\eta$, we see that the slope of $\Upsilon_{-Y,-K, \overline{\mathfrak{s}}}(t)$ cannot equal $g(K)$ when $t < 1$. 
\end{proof}

\section{Examples}

In this section we investigate the strength of Theorem \ref{thm:main}. Recall from the Introduction that results of Hedden in \cite{Hedden} and Ozsv\'{a}th, Stipsicz, and Szab\'{o} in \cite{OSS} combined imply that for a fibered knot $K$ in $S^{3}$, the following three conditions are equivalent: (1) $K$ is strongly quasi-positive, (2) $\Upsilon_{K}(t) = -\tau(K)t = -g(K)t$ for small $t$, and (3) the fibration is compatible with the tight contact structure on $S^{3}$ (which implies that the monodromy is right-veering). A natural question to ask is whether the slope of $\Upsilon$ is always minimal close to $t=0$, because if so, then for fibered knots in $S^{3}$, Theorem \ref{thm:main} would not contain any new information. Example \ref{ex:cable} shows that this is not the case: we demonstrate that there are infinitely many knots where the slope of $\Upsilon$ only achieves the maximal slope of $-g$ away from $t=0$. In particular, this shows that $\Upsilon$ can detect information about the monodromy of a fibered knot that is not detected by $\tau$. However, in Example \ref{ex:small} we show that for small fibered knots in $S^{3}$, $\Upsilon$ always achieves minimal slope close to $t=0$. Finally Example \ref{ex:converse} shows that the converse of Theorem \ref{thm:main} does not hold.

\begin{example}\label{ex:cable}
The $(2, 2n+1)$--cables $T(2,-3)$, for $n \geq 8$, are fibered with right--veering monodromy. Their fibrations support overtwisted contact structures on $S^{3}$.
\end{example}
\begin{proof}
We remark that fiberedness of a cable $K_{p,q}$ of a fibered knot $K$ is well-known: it follows, for instance,
from \cite{Stallings} or from explicitly building the fibration of $Y\setminus K_{p,q}$ from the fibration of the companion $K$ and the fibration of the pattern $(p,q)$-torus knot. It is also well-known that the degree of the Alexander polynomial of a fibered knot detects the genus, and that the Alexander polynomial of a cable knot is determined by
\[\Delta_{K_{p,q}}(t)  = \Delta_K(t^p) \cdot \Delta_{T_{p,q}}(t).\]
The genus of the torus knot $T_{2,2n+1}$ is $n$, and the genus of $T_{2,-3}$ is 1. Thus, the genus of ${T_{2,-3;2,2n+1}}$ is $n+2$.

The above two facts (of fiberedness and the genus) can also be seen using knot Floer techniques:
Hedden in \cite{hedden2005knot} computed $\widehat{HFK}$ for these families (see in particular Proposition 4.1 in \cite{hedden2005knot}). His calculations give us that that the top Alexander grading where $\widehat{HFK}$ is nonzero is $n+2$ and that furthermore $\widehat{HFK}$ is rank one in Alexander grading $n+2$. The top Alexander grading where $\widehat{HFK}$ is nonzero is the genus of the knot (Theorem 1.2 from Ozsv\'{a}th and Szab\'{o} in \cite{OS04b}) and Ni proved in \cite{ni2007knot} that if $\widehat{HFK}$ is rank one in Alexander grading the genus of the knot, then it is fibered. All together this tells us that these knots are fibered and have genus $2+n$.

The Upsilon invariant for this family of cable knots was computed by Chen in \cite{Chen} as follows:

\[
  \Upsilon_{(T_{2,-3})_{2,2n+1}}(t) = 
  \begin{cases}
                                   -(n-1)t & \text{if $t \in [0,\frac{2}{3}]$} \\
                                   2-(n+2)t & \text{if $t \in [\frac{2}{3},1]$} \\
  \end{cases}
\]

Notice that the slope of $\Upsilon$ is not minimal at $t=0$, which implies by Hedden \cite{Hedden} that the fibrations of these knots must support overtwisted contact structures. However, the slope of $\Upsilon$ is minimal (that is, the negative of the genus) on $[\frac{2}{3},1]$, and so Theorem \ref{thm:main} shows that the monodromies of these knots must be right-veering. 

Alternatively, one could prove the right-veering statement by using the fractional Dehn twist coefficients of the knots, which are positive for a positive cable by \cite{Kazez-Roberts}. 
\end{proof}

\begin{example}\label{ex:small}
For fibered knots $K\subset S^{3}$ with less than 10 crossings, $\Upsilon_{K}'(t)=-g(K)$ for some $t\in [0,1)$ if and only if $K$ supports the unique tight contact structure on $S^{3}$.
\end{example}

\begin{proof} For any knot in $S^{3}$, Ozsv\'{a}th, Stipsicz, and Szab\'{o} \cite{OSS} prove that $\Upsilon_{K}(t)=-\tau (K)t$ for small $t$. Moreover, if the fibered knot supports the tight contact structure on $S^{3}$, then $\tau (K)=g(K)$ by Hedden \cite{Hedden}.

For the other direction, we show the contrapositive. Again by Hedden, a fibered knot $K$ supports the tight contact structure in $S^{3}$ if and only if it is strongly quasi-positive. The monodromies of fibered knots under ten crossings that are not strongly quasi-positive are available in \cite{KnotInfo}. By brute force one can see that none of them are right-veering by finding arcs that are sent to the left. Therefore, by Theorem \ref{thm:main}, $\Upsilon'_{K}(t)>-g(K)$. 
\end{proof}

\begin{example} \label{ex:converse}
The converse of Theorem \ref{thm:main} does not hold even for fibered knots in $S^{3}$. 
\end{example}

\begin{proof} Consider the knot $K=8_{20}$, which is a slice and fibered knot. Recall that cabling preserves fiberedness (see e.g. \cite{Hedden-remarkscabling}). Also recall that cabling induces a map on the concordance group (see e.g. \cite{Hedden-PC}).  Thus, the $(p,1)-$cable $K_{p,1}$ of $K$ is also slice and fibered. Indeed, $8_{20}$ is the pretzel $P(3,-3,2)$. One can construct a slice disk by adding two 1-handles and three 2-handles in $B^{4}$. The slice disk of the $(p,1)-$cable can be obtained by stacking $p$ copies of the disks constructed above and connecting them with half-twisted bands. Therefore, $\Upsilon_{K_{p,1}}(t)=0$. On the other hand, Kazez and Roberts \cite{Kazez-Roberts} show that the fractional Dehn twist coefficient of a fibered knot obtained by $(p,1)$--cabling is $\frac{1}{p}>0$. Hence the monodromy of $K_{p,1}$, for $p>0$, is right-veering.
\end{proof}

\section{Applications to Concordance and the Slice-Ribbon Conjecture}\label{sec:applications}

As pointed out in the Introduction, fibered knots with right-veering monodromy can be concordant to fibered knots with non-right-veering monodromy. For instance, let $K$ denote the $(p,1)$-cable of any fibered slice knot. Then $K$ is right-veering since its fractional Dehn twist coefficient $\frac{1}{p}$ is strictly greater than zero. Let $J$ be the mirror of $K$, which has negative fractional Dehn twist coefficient and therefore has non-right-veering monodromy. The knots $K$ and $J$ have the same genus and are both slice since cabling is an operator on the concordance group (see e.g. \cite{Hedden-PC}). Thus, the knot $K$ with right-veering monodromy is concordant to the knot $J$ with non-right-veering monodromy. 

Theorem \ref{thm:main} immediately gives an obstruction to concordance among some fibered knots, which we restate from the Introduction:

\newtheorem*{propconcordanceobst}{Proposition~\ref{prop:concordanceobst}}
\begin{propconcordanceobst} Let $K$ in $S^{3}$ be a fibered knot of genus $g$ such that $\Upsilon'_{K}(t_{0}) = -g(K)$ for some $t_{0} \in [0,1)$. Then $K$ cannot be concordant to any fibered knot in $S^{3}$ of the same genus whose monodromy is not right-veering. 
\end{propconcordanceobst}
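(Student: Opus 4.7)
The proof should be short and proceed by contradiction using concordance invariance (equivalently, additivity) of $\Upsilon$ together with Theorem~\ref{thm:main}. My plan is as follows.

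First I would suppose, for contradiction, that there exists a fibered knot $K'\subset S^{3}$ of genus $g(K')=g=g(K)$ whose monodromy is not right-veering and which is smoothly concordant to $K$. Concordance of $K$ and $K'$ means $K\#-K'$ is slice, so $\Upsilon_{K\#-K'}(t)=0$ for every $t\in[0,2]$. Applying the additivity property from Theorem~\ref{basic}(a), together with $\Upsilon_{-K'}(t)=-\Upsilon_{K'}(t)$ from Theorem~\ref{basic}(b), one obtains the equality of piecewise linear functions
\[
\Upsilon_{K}(t)=\Upsilon_{K'}(t)\quad\text{for all }t\in[0,2].
\]

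Next I would transfer the hypothesis on $K$ to $K'$. Since $\Upsilon_{K}=\Upsilon_{K'}$ as functions, their derivatives agree at every point where both are differentiable (and the singular sets coincide). In particular $\Upsilon'_{K'}(t_{0})=\Upsilon'_{K}(t_{0})=-g=-g(K')$, with the same $t_{0}\in[0,1)$. Theorem~\ref{thm:main} applied to the fibered knot $K'$ (taking $Y=S^{3}$) then forces the monodromy of $K'$ to be right-veering, directly contradicting our assumption.

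There is no substantive obstacle here once Theorem~\ref{thm:main} is available; the only minor care needed is to note that the hypothesis $\Upsilon'_{K}(t_{0})=-g$ implicitly picks a point where $\Upsilon_{K}$ is differentiable, and since the two piecewise linear functions $\Upsilon_{K}$ and $\Upsilon_{K'}$ coincide, $t_{0}$ is also a point of differentiability for $\Upsilon_{K'}$ with the same slope. Thus the proposition reduces to a one-line application of additivity of $\Upsilon$ plus Theorem~\ref{thm:main}, and no new knot Floer input beyond what is already established in Sections~\ref{sec:HF}--\ref{sec:fiberedknots} is required.
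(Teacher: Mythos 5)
Your proof is correct and follows essentially the same approach as the paper: both use the concordance invariance of $\Upsilon$ to transfer the slope hypothesis from $K$ to the concordant knot and then invoke Theorem~\ref{thm:main}. The only cosmetic difference is that you derive $\Upsilon_K=\Upsilon_{K'}$ from additivity and sliceness of $K\#-K'$, whereas the paper simply cites concordance invariance directly.
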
 
\begin{proof} If $K$ is concordant to a fibered knot $J$ of the same genus, then $\Upsilon_{J}(t) = \Upsilon_{K}(t)$ for all $t \in [0,2]$. Thus $\Upsilon'_{K}(t_{0}) = \Upsilon'_{J}(t_{0}) = -g(J)$, implying that the monodromy of $J$ is right-veering by Theorem \ref{thm:main}. \end{proof}

Proposition \ref{prop:concordanceobst} shows that for a special class of fibered knots with right-veering monodromies, namely those for which $\Upsilon_{K}'(t_{0}) = -g(K)$ for some $t_{0} \in [0,1)$, any concordance to a knot of the same genus will preserve the right-veering property. Understanding when and how monodromies of fibered knots can restrict concordance is an interesting question, and as far as we are aware, relatively unexplored up to now. We think this research direction deserves further exploration. 

Baker conjectures that tight, fibered knots are unique in their concordance class. 
\begin{conjecture}[\cite{baker}]
Let $K_{0}$ and $K_{1}$ be fibered knots in $S^{3}$ supporting the tight contact structure. If $K_{0}$ and $K_{1}$ are concordant, then $K_{0}=K_{1}$.
\end{conjecture}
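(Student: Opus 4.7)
The plan is to combine the Upsilon obstruction of Theorem \ref{thm:main} with finer concordance invariants of the knot Floer complex, in the spirit of the ribbon argument underlying Theorem \ref{thm:ribbonequal}, to extract enough rigidity from a concordance between two tight fibered knots to force them to agree.

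First I would record the numerical input. Since $K_0$ and $K_1$ are fibered knots in $S^3$ supporting the tight contact structure, Hedden's theorem gives $\tau(K_i)=g(K_i)$ and that each $K_i$ is strongly quasi-positive, so Ozsv\'ath--Stipsicz--Szab\'o's computation yields $\Upsilon_{K_i}(t) = -g(K_i)\,t$ on $[0,1]$. Concordance-invariance of $\Upsilon$ forces $\Upsilon_{K_0} \equiv \Upsilon_{K_1}$, hence $g(K_0) = g(K_1) =: g$ and $\Upsilon'_{K_i}(t) = -g$ for all $t \in [0,1)$. In particular the hypothesis of Theorem \ref{thm:ribbonequal} is satisfied, and Proposition \ref{prop:concordanceobst} already guarantees that any fibered knot of genus $g$ concordant to $K_0$ is right-veering, so both $K_0$ and $K_1$ are tight, fibered, and of the same genus.

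Next I would recast the problem in terms of the slice knot $J := K_0 \,\#\,(-K_1)$. By additivity $\tau(J) = 0$ and $\Upsilon_J \equiv 0$, and $J$ bounds a smooth slice disk $D \subset B^4$. The argument of Theorem \ref{thm:ribbonequal} already shows that if $D$ could be taken to be ribbon, then $K_0 = K_1$; this is the step that currently forces the detour through Corollary \ref{cor:sliceribbon}. The plan is to replace the ribbon hypothesis with a Floer-theoretic surrogate: use the cobordism maps on $\CFK^\infty$ associated to $D$ to obtain a local equivalence between $\CFK^\infty(K_0)$ and $\CFK^\infty(K_1)$ in the sense of Hom, and then exploit the very rigid structure of tight fibered knots to upgrade this to a filtered equivalence preserving contact classes. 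By Proposition \ref{prop:c} and Baldwin--Vela-Vick, the top Alexander piece is one-dimensional and generated by the contact class $\mathbf{c}$; one would match $\mathbf{c}_0$ with $\mathbf{c}_1$ under the concordance maps, conclude a filtered chain homotopy equivalence $\CFK^\infty(K_0) \simeq \CFK^\infty(K_1)$ carrying contact class to contact class, and then invoke a detection-type theorem for monodromies (in the lineage of Ni's fiberedness detection and Ghiggini's genus-one results) to recover $\phi_{K_0} = \phi_{K_1}$, hence $K_0 = K_1$.

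The main obstacle will be the passage from local equivalence to a filtered equivalence that preserves the contact class. Local equivalence does not in general remember enough of $\CFK^\infty$ to recover a knot, and without a ribbon-style monotonicity the slice-disk cobordism map on $\widehat{\CFK}$ is not known to respect the top Alexander filtration level, so the contact class could a priori be sent to an element that is killed or mixed with lower-filtration generators. This is essentially why Baker's conjecture remains open: an honest resolution appears to require either a concordance-invariant enhancement of Zemke's ribbon-concordance injection (which would simultaneously imply the Slice-Ribbon Conjecture for such $J$, in line with Corollary \ref{cor:sliceribbon}) or a genuinely new contact-geometric rigidity theorem for the Ozsv\'ath--Szab\'o contact class under general four-dimensional cobordisms between tight fibered knots.
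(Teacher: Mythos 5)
The statement you are addressing is stated in the paper as a \emph{conjecture} (Baker's conjecture); the paper does not prove it, and neither does your proposal. You yourself flag the fatal step: after producing the slice knot $J = K_0 \# (-K_1)$ and its slice disk $D$, you need the cobordism maps on $CFK^{\infty}$ induced by $D$ to yield a filtered equivalence between $CFK^{\infty}(K_0)$ and $CFK^{\infty}(K_1)$ that carries contact class to contact class, and then a ``detection-type theorem for monodromies'' recovering $\phi_{K_0} = \phi_{K_1}$. Neither ingredient exists. For a general slice disk there is no known monotonicity or injectivity statement (Zemke's injection requires a ribbon concordance, which is exactly the hypothesis the paper isolates), so the image of $\mathbf{c}_0$ may die or mix into lower Alexander filtration levels, and local equivalence retains far too little information to pin down a knot. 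Worse, even the strongest conceivable conclusion of your strategy --- a filtered chain homotopy equivalence $CFK^{\infty}(K_0) \simeq CFK^{\infty}(K_1)$ preserving contact classes --- would not give $K_0 = K_1$: the knot Floer complex does not determine the knot, and no theorem reconstructs the monodromy of a fibered knot from its complex. The rigidity that makes the ribbon case work in the paper (Lemma \ref{lem:ribbon}, Lemma \ref{lem:mirrorminimal}, Theorem \ref{thm:ribbonequal}) comes from Gordon's theorem on homotopy ribbon concordance, whose proof uses the $\pi_1$-surjectivity and injectivity conditions in an essential way; a bare concordance provides none of that, which is precisely why the paper records the general case only as Question \ref{que:sliceribbon} and Corollary \ref{cor:sliceribbon}.

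Your preliminary reductions are fine as far as they go: Hedden plus Ozsv\'ath--Stipsicz--Szab\'o do give $\Upsilon_{K_i}(t) = -g(K_i)t$ near $t=0$ for tight fibered knots, concordance invariance of $\Upsilon$ gives $g(K_0)=g(K_1)$ (this is Lemma \ref{lem:genera}), and Proposition \ref{prop:concordanceobst} gives right-veeringness of any same-genus fibered knot in the concordance class. But these steps only recover the hypotheses under which the conjecture is posed; the conjecture itself remains open, and your sketch does not close it.
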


Since tight, fibered knots satisfy the condition $\Upsilon'_{K}(t) = -g(K)$ for $t$ close to $0$, we are inspired by Baker to ask the following question from the Introduction:
\newtheorem*{quesliceribbon}{Question~\ref{que:sliceribbon}}
\begin{quesliceribbon} Suppose $K_{0}$ and $K_{1}$ are fibered knots in $S^{3}$ satisfying that for each $i \in \{0, 1\}$, $\Upsilon'_{K_{i}}(t_{i}) = -g(K_{i})$ for some $t_{i} \in [0,1)$. If $K_{0}$ and $K_{1}$ are concordant, is $K_{0} = K_{1}$? 
\end{quesliceribbon}

Supporting a positive answer to Question~\ref{que:sliceribbon} are Lemma~\ref{lem:genera} and Theorem~\ref{thm:ribbonequal}. 
\begin{lemma} \label{lem:genera}
Suppose $K_{0}$ and $K_{1}$ are fibered knots in $S^{3}$ satisfying that for each $i \in \{0, 1\}$, $\Upsilon'_{K_{i}}(t_{i}) = -g(K_{i})$ for some $t_{i} \in [0,1)$. If $K_{0}$ and $K_{1}$ are concordant, then $g(K_0) = g(K_1)$. 
\end{lemma}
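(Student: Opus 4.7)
The plan is to exploit the concordance invariance of $\Upsilon$ together with the slope bound $|\Upsilon'_{K}(t)| \leq g(K)$ stated earlier in the paper. Concordance invariance gives the identity $\Upsilon_{K_0}(t) = \Upsilon_{K_1}(t)$ as functions on $[0,2]$; in particular, wherever one side is differentiable, so is the other, and the derivatives agree. This reduces everything to extracting genus information from the slope of this single common function at the two specified parameter values $t_0, t_1 \in [0,1)$.

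First, I would evaluate the equal derivatives at $t_0$. By hypothesis, $\Upsilon'_{K_0}(t_0) = -g(K_0)$, so by concordance invariance $\Upsilon'_{K_1}(t_0) = -g(K_0)$ as well. Applying the slope bound to $K_1$ gives
\[ g(K_0) = |\Upsilon'_{K_1}(t_0)| \leq g(K_1). \]
Symmetrically, evaluating at $t_1$ yields $\Upsilon'_{K_0}(t_1) = -g(K_1)$, and the slope bound applied to $K_0$ gives $g(K_1) \leq g(K_0)$. Combining the two inequalities forces $g(K_0) = g(K_1)$.

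There is no real obstacle here; the one minor subtlety worth noting is that the slope bound in the earlier corollary is formulated at differentiable points, but this is harmless because differentiability at $t_i$ is part of the hypothesis (implicit in writing $\Upsilon'_{K_i}(t_i)$) and transfers across the equality of functions. Since the whole argument uses only the piecewise-linear structure of $\Upsilon$ and its concordance invariance, no knot Floer input beyond what has already been established in the excerpt is required.
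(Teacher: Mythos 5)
Your proof is correct and uses essentially the same ingredients as the paper's: concordance invariance of $\Upsilon$ plus the slope bound $|\Upsilon'_K(t)| \leq g(K)$. The paper phrases it as a proof by contradiction (WLOG $g(K_1) > g(K_0)$, then evaluate at $t_1$ to contradict the bound for $K_0$), whereas you run the symmetric argument in both directions to get two inequalities directly; this is only a cosmetic difference.
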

\begin{proof}
Suppose without loss of generality that $-g(K_1) < -g(K_0)$.  
For all $t \in [0,2]$,
\[-g(K_0) \leq \Upsilon'_{K_0}(t) \leq g(K_0). \]
If $K_0$ and $K_1$ are concordant, then $\Upsilon'_{K_0}(t) = \Upsilon'_{K_1}(t)$ for all $t$.  In particular, \[ \Upsilon'_{K_0}(t_1) =  \Upsilon'_{K_1}(t_1) = -g(K_1) < -g(K_0),\] a contradiction.
\end{proof}

A positive answer to Question \ref{que:sliceribbon} would imply Baker's conjecture. We show in Theorem~\ref{thm:ribbonequal} that the answer to Question \ref{que:sliceribbon} is yes if concordance is replaced with $K_{0} \# -K_{1}$ being ribbon, and thus we observe:
\newtheorem*{corsliceribbon}{Corollary~\ref{cor:sliceribbon}}
\begin{corsliceribbon} If the answer to Question \ref{que:sliceribbon} is negative, then the Slice-Ribbon Conjecture is false.
\end{corsliceribbon}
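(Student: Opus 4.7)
The plan is to prove Corollary \ref{cor:sliceribbon} by a short contrapositive/logical argument that directly combines Theorem \ref{thm:ribbonequal} with the hypothesis that Question \ref{que:sliceribbon} has a negative answer. No further Floer-theoretic input is needed; the work has already been done in Theorem \ref{thm:ribbonequal}.

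First I would assume, for contradiction, that the Slice-Ribbon Conjecture is true, while simultaneously Question \ref{que:sliceribbon} has a negative answer. Unpacking the negation of Question \ref{que:sliceribbon}, this yields fibered knots $K_0, K_1 \subset S^3$ with $\Upsilon'_{K_i}(t_i) = -g(K_i)$ for some $t_i \in [0,1)$, such that $K_0$ and $K_1$ are concordant but $K_0 \neq K_1$. Since concordance of $K_0$ and $K_1$ is equivalent to the connected sum $K_0 \# -K_1$ being smoothly slice, this connected sum is slice.

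Next I would apply the assumed Slice-Ribbon Conjecture to conclude that $K_0 \# -K_1$ is ribbon. Since $[0,1) \subset [0,1]$, the hypotheses on $\Upsilon'_{K_i}$ required by Theorem \ref{thm:ribbonequal} are satisfied. Invoking Theorem \ref{thm:ribbonequal} then forces $K_0 = K_1$, directly contradicting the assumption that Question \ref{que:sliceribbon} had a negative counterexample. This contradiction shows the Slice-Ribbon Conjecture must be false, completing the proof.

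The argument is essentially a syllogism, so there is no real obstacle: the only subtlety is making sure that the interval in the hypothesis of Question \ref{que:sliceribbon} (namely $[0,1)$) is contained in the interval appearing in Theorem \ref{thm:ribbonequal} (namely $[0,1]$), so that the theorem can be applied to the hypothetical counterexample. All substantive content is packaged in Theorem \ref{thm:ribbonequal}; the corollary is simply the observation that weakening ``ribbon'' to ``slice'' in that theorem's hypothesis cannot succeed unless the Slice-Ribbon Conjecture itself fails.
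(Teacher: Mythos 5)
Your proof is correct and takes essentially the same approach as the paper: the paper explicitly presents Corollary \ref{cor:sliceribbon} as an immediate observation from Theorem \ref{thm:ribbonequal}, noting that the theorem answers Question \ref{que:sliceribbon} affirmatively when ``concordant'' is strengthened to ``$K_0 \# -K_1$ is ribbon,'' which is precisely the contrapositive syllogism you carry out. Your attention to the interval inclusion $[0,1) \subset [0,1]$ is a nice bit of care that the paper leaves implicit.
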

We remark that it is possible that Baker's conjecture is true, but the answer to Question \ref{que:sliceribbon} is negative. Thus this question widens the avenue that Baker opened to disprove the Slice-Ribbon Conjecture. We closely follow \cite{baker} below.

\begin{definition}
Let $K_{0}$, $K_{1}$ be knots in homology 3-spheres $Y_{0}$ and $Y_{1}$. $(Y_{1},K_{1})$ is homotopy ribbon concordant to $(Y_{0},K_{0})$, denoted by $K_{1}\geq K_{0}$, if there exists some 4-dimensional manifold $X$ that is a homology $S^{3}\times [0,1]$ containing an embedded smooth annulus $A$ such that $(\partial X, A\cap\partial X)=(Y_{1},K_{1})\sqcup -(Y_{0},K_{0})$ with surjective $\pi_{1}(Y_{1}-K_{1})\rightarrow\pi_{1}(X-A)$ and injective $\pi_{1}(Y_{0}-K_{0})\rightarrow\pi_{1}(X-A)$. 
\end{definition}
Homotopy ribbon concordance generalizes the notion of ribbon concordance \cite[Lemma 3.1]{Gordon}. Following the terminology of \cite{HomLevineLidman}, we say that two knots $K_0 \subset Y_0$ and $K_1 \subset Y_1$ are homology concordant if there is a smoothly embedded annulus in some homology cobordism with boundary $(Y_0,K_{1})\sqcup -(Y_1,K_{0})$. For knots in $S^{3}$, if $K_{1}\geq K_{0}$, then $K_{1}$ and $K_{0}$ are homology concordant. 

\begin{theorem}[{\cite[Theorem 1.7]{HomLevineLidman}}]
If the knots $K_{0} \subset Y_0$ and $K_{1}\subset Y_1$ are homology concordant, then $\Upsilon_{K_{0}, Y_0}(t)=\Upsilon_{K_{1}, Y_1}(t)$.
\end{theorem}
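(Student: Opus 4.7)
The plan is to establish the equality $\Upsilon_{K_0, Y_0}(t) = \Upsilon_{K_1, Y_1}(t)$ by producing filtered chain maps between the two knot Floer complexes in both directions and invoking the minimax characterization of $\nu_t$ from Definition \ref{nu}. Specifically, to a homology concordance $(X, A)$ from $(Y_0, K_0)$ to $(Y_1, K_1)$ I would associate a chain map
\[ F_{X, A} \colon \CFK^{\infty}(Y_0, K_0) \longrightarrow \CFK^{\infty}(Y_1, K_1) \]
satisfying three properties: (i) it is $\F[U, U^{-1}]$-equivariant and preserves the absolute Maslov grading, using $d(Y_0) = d(Y_1)$ since $X$ is a homology cobordism between homology spheres; (ii) it is filtered with respect to $F_t$ for every $t \in [0, 2]$, i.e., $F_t(F_{X, A}(\theta)) \leq F_t(\theta)$ for every chain $\theta$; and (iii) it induces an isomorphism on $HF^{\infty} \cong \F[U, U^{-1}]$.

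To construct $F_{X, A}$, I would choose a handle decomposition of $X$ in general position with respect to the embedded annulus $A$, so that $(X, A)$ decomposes as a concatenation of elementary pieces: index-$k$ handles of $X$ attached away from $A$, and births and deaths of $A$ (no saddles appear, as $A$ has genus zero). To each piece I would attach the standard Heegaard Floer handle map or quasi-stabilization map in the style of Juh\'{a}sz and Zemke, and take the composition. The hypothesis that $X$ is a homology cobordism ensures that all $2$-handles are attached along nullhomologous curves in intermediate homology spheres, which both keeps $HF^{\infty}$ one-dimensional over $\F[U, U^{-1}]$ at every stage and forces the total cobordism map, evaluated in the unique spin-c structure extending the given ones on the boundary, to induce an isomorphism on $HF^{\infty}$, giving property (iii). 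Property (i) follows from the grading-shift formulas for the elementary maps, which collapse to zero when $b_2^{\pm}(X) = 0$.

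Once $F_{X, A}$ is in hand, the desired equality is immediate. Given a cycle $\theta$ in $\CFK^{\infty}(Y_0, K_0)$ of Maslov grading $d(Y_0)$ that realizes $\nu_t(Y_0, K_0)$ in the sense of Definition \ref{nu}, property (iii) guarantees that $F_{X, A}(\theta)$ is a cycle representing a nonzero class in $HF^{\infty}(Y_1)$ at Maslov grading $d(Y_1)$, hence is an admissible competitor for $\nu_t(Y_1, K_1)$. Property (ii) then gives $\nu_t(Y_1, K_1) \leq F_t(F_{X, A}(\theta)) \leq F_t(\theta) = \nu_t(Y_0, K_0)$. Running the argument with the reversed cobordism $(-X, -A) \colon (Y_1, K_1) \to (Y_0, K_0)$ yields the opposite inequality, and multiplying by $-2$ converts the equality of $\nu_t$'s into the stated equality of $\Upsilon$'s. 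The main obstacle is property (ii): one must verify that the birth and death maps for $A$ and all handle maps for $X$ are simultaneously filtered with respect to both the algebraic ($i$-) and Alexander ($j$-) filtrations, a careful bookkeeping of basepoint-preserving holomorphic polygon counts that constitutes the technical heart of the Hom--Levine--Lidman argument.
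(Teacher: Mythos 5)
You should first note that the paper does not actually prove this statement: it is imported verbatim as Theorem 1.7 of the cited Hom--Levine--Lidman paper, so the only meaningful comparison is with their argument. Your overall architecture is the standard one and matches that proof in spirit: associate to the homology concordance a map of knot Floer complexes that is $\F[U,U^{-1}]$-equivariant, absolutely graded, filtered with respect to $F_t$, and an isomorphism on $HF^\infty$ (a ``local map''), do the same for the reversed cobordism, and feed both into the minimax characterization of $\nu_t$ in Definition \ref{nu}. That part of your plan is sound.

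There is, however, a concrete false step in your construction of $F_{X,A}$: you assert that a generic decomposition of the pair $(X,A)$ involves only births and deaths of $A$, ``as $A$ has genus zero.'' Genus zero does not exclude saddles. A generic annulus cobordism has births, saddles, and deaths, constrained only by $\#\text{births}-\#\text{saddles}+\#\text{deaths}=\chi(A)=0$; in fact every birth creates a new closed component that can only be rejoined to the annulus by a saddle, so saddles are unavoidable unless $A$ is isotopic to a critical-point-free product concordance, which is false in general. Your list of elementary maps is therefore incomplete: you must include the band (saddle) maps of the Juh\'asz--Zemke link-cobordism package. This matters exactly at your property (ii), since the individual band maps are \emph{not} filtered on the nose --- they shift the Alexander grading by $\pm\tfrac{1}{2}$, with the two types (bands crossing the $w$- versus $z$-side) interacting differently with the $i$- and $j$-filtrations. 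The correct statement is that each elementary map is filtered up to an explicit shift given by Zemke's grading-change formulas, and that the \emph{net} shift of the composite vanishes because $A$ is a connected genus-zero surface with $[A]=0$ in a homology cobordism $X$ (which also kills the Maslov shift $(c_1^2-2\chi-3\sigma)/4$, giving your property (i)). With the saddle maps included and this bookkeeping done, your two local maps exist, the complexes are locally equivalent, and the equality of $\nu_t$ and hence of $\Upsilon$ follows exactly as in your final paragraph.
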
 

The following lemma generalize Baker's result that tight fibered knots are minimal with respect to homotopy ribbon concordance among fibered knots in $S^{3}$ \cite[Lemma 2]{baker}. 
\begin{lemma} \label{lem:ribbon}
Let $K$ be a fibered knot in $S^{3}$. If $\Upsilon_{K}'(t)= - g(K)$ for some $t \in [0,2]$, then $K$ is minimal with respect to homotopy ribbon concordance among fibered knots in $S^{3}$.
\end{lemma}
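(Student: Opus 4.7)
The plan is to assume $K \geq K_0$ is a homotopy ribbon concordance with $K_0$ another fibered knot in $S^3$, and deduce $K = K_0$. The argument mirrors Baker's proof of his Lemma 2 on tight fibered knots, with the $\Upsilon$ invariant playing the role of the $\tau$ invariant. There are three steps: transfer of the $\Upsilon$ hypothesis via concordance invariance, an equality of Seifert genera via a two-sided squeeze, and a rigidity finisher.

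First, homotopy ribbon concordance is a form of homology concordance, so by the Hom--Levine--Lidman theorem cited above \cite{HomLevineLidman}, one has $\Upsilon_K(t) = \Upsilon_{K_0}(t)$ for all $t \in [0,2]$. In particular, $\Upsilon'_{K_0}(t) = \Upsilon'_K(t) = -g(K)$ at the value of $t$ where the hypothesis holds, and the general slope bound $|\Upsilon'_{K_0}(t)| \leq g(K_0)$ forces $g(K_0) \geq g(K)$. For the reverse inequality, I would invoke Gordon's classical theorem \cite{Gordon} that a homotopy ribbon concordance $K \geq K_0$ yields divisibility of Alexander polynomials, $\Delta_{K_0}(t) \mid \Delta_K(t)$. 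Since both knots are fibered, their Alexander polynomials have degree exactly twice the Seifert genus, giving $g(K_0) \leq g(K)$. Combining the two inequalities, $g(K) = g(K_0)$.

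The remaining step is to promote the equality of genera, together with fiberedness and the homotopy ribbon concordance, to the equality $K = K_0$. This is the step I expect to be the main obstacle, since the $\Upsilon$ hypothesis enters only indirectly through the genus equality. Following Baker's approach, the finisher is a rigidity result specific to fibered knots: a homotopy ribbon concordance $K \geq K_0$ between fibered knots in $S^3$ of equal Seifert genus must be trivial. This uses the surjection and injection of knot groups built into the definition of homotopy ribbon concordance, in combination with the fact that a fibered knot is determined by its fiber surface together with the monodromy; the equality of Seifert genera prevents the concordance from nontrivially altering the fibration, so $K$ and $K_0$ must be isotopic.
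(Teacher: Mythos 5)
Your proof takes essentially the same route as the paper: transfer $\Upsilon$ across the homotopy ribbon concordance via the Hom--Levine--Lidman theorem, combine the slope bound $|\Upsilon'_{K_0}(t)| \leq g(K_0)$ with Gordon's Alexander-polynomial divisibility to squeeze $g(K) = g(K_0)$, and then appeal to Gordon's rigidity for fibered knots. The ``rigidity finisher'' you flag as the main uncertainty is exactly Gordon's Lemma~3.4 in \cite{Gordon} (a homotopy ribbon concordance between fibered knots whose Alexander polynomials have equal degree is trivial), which the paper invokes directly once $d(K) = d(K_0)$ is established.
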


\begin{proof}
If a fibered knot $K$ in $S^3$ is homotopically ribbon concordant to a fibered knot $J$ in $S^3$, then Gordon \cite[Lemma 3.4]{Gordon} showed that $d(K) \geq d(J)$, where $d$ denotes the degree of the Alexander polynomial, and because $K$ and $J$ are fibered knots, we have $g(K) \geq g(J)$. Since $K$ is homotopically ribbon concordant to $J$, the knots $K$ and $J$ are in particular homology concordant, and thus $\Upsilon_K(t) = \Upsilon_J(t)$ by \cite[Theorem 1.7]{HomLevineLidman}. By the hypothesis, there exists some $t \in [0,2]$ such that $- g(K) = \Upsilon_K'(t) = \Upsilon_J'(t) \geq - g(J) $, which implies that $g(K) = g(J)$.  Then $d(K) = d(J)$, and by \cite[Lemma 3.4]{Gordon},  we have $K = J$. 
\end{proof}

Similarly, we can prove that mirrors of knots satisfying this $\Upsilon$ condition are minimal with respect to homotopy ribbon concordance among fibered knots. 

\begin{lemma} \label{lem:mirrorminimal}
Let $K$ be a fibered knot in $S^{3}$. If $\Upsilon_{K}'(t)= - g(K)$ for some $t \in [0,2]$, then the mirror $-K$ of $K$ is minimal with respect to homotopy ribbon concordance among fibered knots in $S^{3}$.
\end{lemma}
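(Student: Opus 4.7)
The plan is to mimic the proof of Lemma \ref{lem:ribbon}, but applied to the mirror knot, using the mirroring behavior of $\Upsilon$ from Theorem \ref{basic}. Suppose $-K$ is homotopy ribbon concordant to a fibered knot $J$ in $S^3$, so that $-K \geq J$. By \cite[Lemma 3.4]{Gordon}, $d(-K) \geq d(J)$ where $d$ denotes the degree of the Alexander polynomial; since $-K$ and $J$ are fibered and $g(-K) = g(K)$, this gives $g(K) \geq g(J)$.

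Next I would feed in the $\Upsilon$ hypothesis. By assumption there exists $t_0 \in [0,2]$ such that $\Upsilon'_{K}(t_0) = -g(K)$. Using Theorem \ref{basic}(b), which gives $\Upsilon_{-K}(t) = -\Upsilon_{K}(t)$, this yields $\Upsilon'_{-K}(t_0) = g(K) = g(-K)$. Since homotopy ribbon concordance is in particular a homology concordance, Theorem 1.7 of \cite{HomLevineLidman} applies, so $\Upsilon_{-K}(t) = \Upsilon_J(t)$ for all $t$, and in particular $\Upsilon'_J(t_0) = g(K)$.

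Now I would combine this with the general bound $|\Upsilon'_J(t_0)| \leq g(J)$ from the corollary following Theorem \ref{func}. This forces $g(J) \geq g(K)$, and combined with the earlier inequality $g(K) \geq g(J)$ we get $g(K) = g(J)$, hence $d(-K) = d(J)$. Applying \cite[Lemma 3.4]{Gordon} once more concludes $-K = J$. The only step that requires any real care is the translation of the hypothesis from $K$ to $-K$ via the mirror symmetry $\Upsilon_{-K}(t) = -\Upsilon_K(t)$; everything else is a direct rerun of the argument for Lemma \ref{lem:ribbon}.
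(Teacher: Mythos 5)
Your proposal is correct and follows essentially the same argument as the paper: both use Gordon's lemma to get $g(-K) \geq g(J)$, pass from $K$ to $-K$ via $\Upsilon_{-K}(t) = -\Upsilon_K(t)$, invoke homology-concordance invariance of $\Upsilon$ to identify $\Upsilon_{-K}$ with $\Upsilon_J$, and then use the bound $|\Upsilon'_J(t)| \leq g(J)$ to force equality of genera before concluding with Gordon's lemma again.
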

\begin{proof}
If the fibered knot $-K$ in $S^3$ is homotopically ribbon concordant to a fibered knot $J$ in $S^3$, then Gordon \cite[Lemma 3.4]{Gordon} showed that $d(-K) \geq d(J)$, where $d$ denotes the degree of the Alexander polynomial, and because $-K$ and $J$ are fibered knots, we have $g(-K) \geq g(J)$. Since $-K$ is homotopically ribbon concordant to $J$, the knots $-K$ and $J$ are in particular homology concordant, and thus $\Upsilon_{-K}(t) = \Upsilon_J(t)$ by \cite[Theorem 1.7]{HomLevineLidman}. By the hypothesis, there exists some $t \in [0,2]$ such that \[-g(-K) = -g(K) = \Upsilon_K'(t) = -\Upsilon_{-K}'(t) = - \Upsilon_J'(t) \geq - g(J),\] which implies that $g(-K) = g(J)$.  Then $d(-K) = d(J)$, and by \cite[Lemma 3.4]{Gordon},  we have $-K = J$. 
\end{proof}

\begin{theorem}\label{thm:ribbonequal}
Let $K_{0}$ and $K_{1}$ be fibered knots in $S^{3}$ such that $\Upsilon'_{K_{i}}(t_{i}) = -g(K_{i})$ for some $t_{i} \in [0,1]$, for each $i \in \{0, 1\}$. If $K_{0}\# -K_{1}$ is ribbon, then $K_{0}=K_{1}$.
\end{theorem}
\begin{proof}
The proof of \cite[Theorem 3]{baker} carries through verbatim, relying on our Lemma \ref{lem:ribbon} and Lemma \ref{lem:mirrorminimal} instead of \cite[Lemma 2]{baker}. 
\end{proof}


\bibliographystyle{alpha}
\bibliography{bib}

\end{document}